\numberwithin{equation}{section}
\newtheorem{theorem}{Theorem}[section]
\newtheorem{propo}[theorem]{Proposition}
\newtheorem{lemma}[theorem]{Lemma}
\newtheorem{corollary}[theorem]{Corollary}
\newcommand{\N}{\mathbb{N}}
\newcommand{\R}{\mathbb{R}^d}
\newcommand{\Om}{\Omega}
\newcommand{\EE}{\mathcal{E}}
\newcommand{\OO}{\mathcal{O}}
\newcommand{\D}{{\rm Dom}}
\newcommand{\pr}{\mathbf{P}}
\newcommand{\ex}{\mathbf{E}}
\newcommand{\set}{\Omega}
\newcommand{\g}{g}
\newcommand{\gD}{g^{\set}}
\def\a{\alpha}
\def\la{\lambda}
\def\v{\varphi}
\def\s{\sigma}
\begin{document}
\title[Reflection principles]{Reflection principles for functions of Neumann and Dirichlet Laplacians on open reflection invariant subsets of $\R$}
\author[J. Ma\l{}ecki]{Jacek Ma\l{}ecki}
\address{Jacek Ma\l{}ecki \endgraf\vskip -0.1cm
         Wydzia\l{} Matematyki \endgraf\vskip -0.1cm
         Politechnika Wroc\l{}awska \endgraf\vskip -0.1cm
         Wyb{.} Wyspia\'nskiego 27 \endgraf\vskip -0.1cm
         50-370 Wroc\l{}aw, Poland \endgraf \vskip -0.1cm
         }
\email{Jacek.Malecki@pwr.edu.pl}

\author[K. Stempak]{Krzysztof Stempak}
\address{Krzysztof Stempak \endgraf\vskip -0.1cm
         Wydzia\l{} Matematyki \endgraf\vskip -0.1cm
         Politechnika Wroc\l{}awska \endgraf\vskip -0.1cm
         Wyb{.} Wyspia\'nskiego 27 \endgraf\vskip -0.1cm
         50-370 Wroc\l{}aw, Poland \endgraf \vskip -0.1cm
         }
\email{Krzysztof.Stempak@pwr.edu.pl}

\thanks{Research  supported by funds of Faculty of Pure and Applied Mathematics, Wroc\l{}aw University of Science and Technology, 
\# 0401/0121/17.}

\begin{abstract} For an open subset $\Omega$ of $\mathbb R^d$, symmetric with respect to a hyperplane and with positive part $\Omega_+$, we consider the Neumann/Dirichlet Laplacians $-\Delta_{N/D,\Omega}$ and  
$-\Delta_{N/D,\Omega_+}$. Given a Borel function $\Phi$ on $[0,\infty)$ we apply the spectral functional calculus and consider the pairs 
of operators $\Phi(-\Delta_{N,\Omega})$ and $\Phi(-\Delta_{N,\Omega_+})$, or  $\Phi(-\Delta_{D,\Omega})$ and $\Phi(-\Delta_{D,\Omega_+})$. We prove relations between the integral kernels 
for the operators in these pairs, which in particular cases of $\Omega_+=\mathbb{R}^{d-1}\times(0,\infty)$ and $\Phi_{t}(u)=\exp(-tu)$, $u \geq 0$, $t>0$, were known as reflection principles 
for the Neumann/Dirichlet heat kernels. These relations are then generalized to the context of symmetry with respect to a finite number of mutually orthogonal hyperplanes.
 
\vskip 0.3cm \noindent
{\bf Key words and phrases.}  Neumann Laplacian, Dirichlet Laplacian, self-adjoint operator, reflection principle, sesquilinear form, functional calculus.
\vskip 0.3cm
\noindent
{\bf 2010 Mathematics Subject Classification.} 35K08, 47B25, 60J65.
\end{abstract}

\maketitle
\section{Introduction} \label{sec:intro}
Let $\Om$ be a nonempty open subset of $\mathbb R^d$,  $d\geq 1$, and let $\Delta=\sum_1^d \partial_j^2$ denote the Laplacian. If not otherwise stated, $-\Delta_\Om$ will mean the differential operator
$f\mapsto -\Delta f$ with domain $C^\infty_c(\Om)$ (the space of compactly supported $C^\infty$ functions on $\Om$), which is dense in $L^2(\Om)$. Clearly  $-\Delta_\Om$ is symmetric,
$$
\langle(-\Delta_\Om)f,g\rangle_{L^2(\Om)}=\langle f,(-\Delta_\Om)g\rangle_{L^2(\Om)}, \qquad f,g\in {\rm Dom}(-\Delta_\Om)=C^\infty_c(\Om),
$$
and non-negative, $\langle (-\Delta_\Om)f,f\rangle_{L^2(\Om)}\ge0$ for $f\in {\rm Dom}(-\Delta_\Om)$. 

The Sobolev spaces $H^n(\Om)$ and $H^n_0(\Om)$,
$n\in \N$, denoted also as $W^{n,2}(\Om)$ and $W^{n,2}_0(\Om)$,  are defined as follows (see, for instance, \cite[Appendix D]{Sch} or \cite[Chapter 6]{D2}): $H^n(\Om)$ is the linear space of 
functions $f\in L^2(\Om)$ for which the distributional derivative $\partial^\a f$
belongs to $L^2(\Om)$ for all $\a\in\N^d$, $|\a|\le n$, endowed with the inner product
$$
\langle f,g\rangle_{H^n(\Om)}=\sum_{|\a|\le n}\langle \partial^\a f,\partial^\a g\rangle_{L^2(\Om)},
$$
and  $H^n_0(\Om)$ is the closure of  $C^\infty_c(\Om)$ in $(H^n(\Om), \|\cdot\|_{H^n(\Om)})$. Then  $H^n(\Om)$ (and thus also $H^n_0(\Om)$) is a Hilbert space.

Let $\mathfrak{t}_\Om$ be the sesquilinear form defined on the domain $H^1(\Omega)$ by
$$
\mathfrak{t}_\Om[f,g]=\int_{\Om}(\nabla f)(x)\cdot\overline{(\nabla g)(x)}\,dx=\int_{\Om}\sum_{j=1}^d \partial_jf(x)\,\overline{\partial_jg(x)}\,dx. 
$$
The \textit{Neumann Laplacian} on $\Om$, denoted by $-\Delta_{N,\Om}$, is defined as the operator on $L^2(\Omega)$ associated with the  
form $\mathfrak{t}_{N,\Omega}:=\mathfrak{t}_\Om$; in particular, ${\rm Dom}(-\Delta_{N,\Om})\subset{\rm Dom}(\mathfrak{t}_{N,\Omega}):= H^1(\Omega)$. On the other hand, 
the \textit{Dirichlet Laplacian} on $\Om$, denoted $-\Delta_{D,\Om}$, is defined as the operator on $L^2(\Omega)$ associated with the 
form $\mathfrak{t}_{D,\Omega}$, which is the restriction of $\mathfrak{t}_{\Omega}$ to $H^1_0(\Om)$; in particular, ${\rm Dom}(-\Delta_{D,\Om})\subset{\rm Dom}(\mathfrak{t}_{D,\Omega}):= H^1_0(\Omega)$.
Since the forms $\mathfrak{t}_{N,\Omega}$ and $\mathfrak{t}_{D,\Omega}$ are Hermitian, closed and non-negative, the associated operators are self-adjoint and non-negative. 
See \cite[Chapter 10 and Section 3 of Chapter 12]{Sch}. Each of the operators $-\Delta_{N/D,\Om}$ is indeed an extension of $-\Delta_{\Om}$; this follows from the definitions in terms of forms, with an application of Green's formulas for functions from Sobolev classes, that can be found, for instance, in \cite[Appendix D]{Sch}.
We also mention that $-\Delta_{D,\Om}$ coincides with the Friedrichs extension 
of $\overline{-\Delta_{\Om}}$, the closure of $-\Delta_{\Om}$. See \cite[Section 10.6.1]{Sch}.

In the setting of a general open set $\Om$ it is known (see, for instance, \cite[Section 10.6.1]{Sch}) that
\begin{equation*}
{\rm Dom}(-\Delta_{D,\Om,})=H^\Delta(\Om)\cap H^1_0(\Om)
\end{equation*}
and
\begin{equation*}
-\Delta_{D,\Om}f=-\Delta f\,\,\,\,{\rm for}\,\,\, f\in {\rm Dom}(-\Delta_{D,\Om}).
\end{equation*}
Here, for the sake of convenience, we used the notation
$$
H^\Delta(\Om)=\{f\in L^2(\Om)\colon \Delta f\in L^2(\Om)\}
$$
and for $f\in L^2(\Om)\subset \mathcal C^\infty_c(\Om)'$, $\Delta f$ is understood  in the distributional sense.
Note that $H^2(\Om)\subset H^\Delta(\Om)$ but in general the inclusion may be proper. Contrary to the case of the Dirichlet Laplacian much less is known about the explicit description of 
${\rm Dom}(-\Delta_{N,\Om})$, the domain of the Neumann Laplacian, in the setting of general $\Om\subset \R$.

If $\Om$ is an open bounded subset  in $\R$, $d\ge2$, with boundary $\partial\Om$ of class $C^2$, or an open bounded subset of $\mathbb{R}$, then there are much finer results concerning properties of $-\Delta_{D,\Omega}$ and $-\Delta_{N,\Omega}$. In particular,
in this case the Dirichlet Laplacian refers to vanishing boundary values at $\partial\Om$ and the Neumann Laplacian refers to vanishing directional normal derivatives at $\partial\Om$.  See, for instance,
 \cite[Theorems 10.19 and 10.20]{Sch}.

The case $\Om=\R$ is special. Then (see, for instance, \cite[Theorem D.3, Appendix D]{Sch}), for any $n\in\N$ we have
$$
H^n(\R)=H^n_0(\R)=\{f\in L^2(\R)\colon \|\cdot\|^n\mathcal{F}f\in L^2(\R)\},
$$
and the latter space, for $n=2$ coincides with $H^\Delta(\R)$
(here $\mathcal{F}$ denotes the Fourier-Plancherel transform on $L^2(\R)$). Hence, 
$$
{\rm Dom}(-\Delta_{D,\R})=H^2(\R) \,\,\,{\rm and}\quad -\Delta_{D,\R}f=-\Delta f\,\,\,\,{\rm for}\,\,\, f\in H^2(\R).
$$
Since $H^1(\R)= H^1_0(\R)$, from the very definitions of the considered operators, it follows that $-\Delta_{N,\R}=-\Delta_{D,\R}$.

By the spectral theorem, we associate with the Dirichlet Laplacian $-\Delta_{D,\Om}$ the semigroup  $\{\exp(-t(-\Delta_{D,\Om}))\}_{t>0}$ of bounded on $L^2(\Om)$ operators, called the \textit{Dirichlet heat semigroup}.
Each  $\exp(-t(-\Delta_{D,\Om}))$, $t>0$, is an integral operator with a kernel $p_t^{D,\Om}(x,y)$, that is for every $f\in L^2(\Om)$ there holds
$$
\exp(-t(-\Delta_{D,\Om}))f(x)=\int_{\Om}p_t^{D,\Om}(x,y)f(y)dy, \qquad x-a.e.
$$
Moreover, as a function on $(0,\infty)\times\Om\times\Om$, $p_t^{D,\Om}(x,y)$ is $C^\infty$ and strictly positive. See \cite[Theorem 5.2.1]{D1}.
Then $\{p_t^{D,\Om}(x,y)\}_{t>0}$,  is called the \textit{Dirichlet heat kernel on} $\Om$.

Analogously, we consider the \textit{Neumann heat semigroup} $\{\exp(-t(-\Delta_{N,\Om}))\}_{t>0}$ associated with  $-\Delta_{N,\Om}$. As before,
each  $\exp(-t(-\Delta_{N,\Om}))$, $t>0$, is an integral operator with a kernel $p_t^{N,\Om}(x,y)$
which, as a function on $(0,\infty)\times\Om\times\Om$, is $C^\infty$ and strictly positive. 
Then $\{p_t^{N,\Om}(x,y)\}_{t>0}$,  is called the \textit{Neumann heat kernel on} $\Om$.

Clearly, in the special case of $\Om=\R$, skipping in the notation the symbol $\R$, we have
$$
p_t^N(x,y)=p_t^D(x,y)=p_t(x,y):=(4\pi t)^{-d\slash2}\exp(-\|x-y\|^2\slash4t).
$$
It is also known that for the half-space $\mathbb{R}^{d}_+:=\mathbb R^{d-1}\times (0,\infty)$ the corresponding Neumann/Dirichlet heat kernels, denoted $p_t^{N,+}(x,y)$ and $p_t^{D,+}(x,y)$, are related to $p_t(x,y)$ by
$$
p_t^{N,+}(x,y)=p_t(x,y)+p_t(\tilde x,y),\qquad p_t^{D,+}(x,y)=p_t(x,y)-p_t(\tilde x,y), 
$$
where $x,y\in \mathbb{R}^{d}_+$, and $\tilde x=(\check x,-x_d)$ denotes the reflection point of $x=(\check x,x_d)\in \mathbb{R}^{d}_+$,  with respect to the 
hyperplane orthogonal to $(0,\ldots,0,1)$.

The aim of this paper is to prove that a similar principle holds for kernels of operators emerging in spectral calculus applied to Neumann/Dirichlet Laplacians in the general setting of an open $\Om\subset\R$, which is symmetric with respect to the hyperplane $\langle v\rangle^\bot$. 

Let $\s_v$ be the orthogonal reflection with respect to  $\langle v\rangle^\bot$ (see Section \ref{sec:prel} for details). Recall that $-\Delta_{N/D,\Om}$ and $-\Delta_{N/D,\Om_+}$ are non-negative and hence their spectra are contained in $[0,\infty)$.

\begin{theorem} \label{thm:main} Let $\Om$ be an open subset of $\R$ symmetric with respect to $\langle v\rangle^\bot$  with $\Om_+$ as its positive part. 
Let $\Phi$ be a Borel function on $[0,\infty)$. 
Assume that $\Phi(-\Delta_{N,\Om})$ is an integral operator with the kernel $K^\Phi_{-\Delta_{N,\Om}}$. Then $\Phi(-\Delta_{N,\Om_+})$ is also an integral operator with the kernel 
$K^\Phi_{-\Delta_{N,\Om_+}}$ given by
\begin{equation}\label{kerN}
K^\Phi_{-\Delta_{N,\Om_+}}(x,y)=K^\Phi_{-\Delta_{N,\Om}}(x,y)+ K^\Phi_{-\Delta_{N,\Om}}(\s_v(x),y),\qquad x,y\in\Om_+.
\end{equation}
Similarly, if $\Phi(-\Delta_{D,\Om})$ is an integral operator with the kernel $K^\Phi_{-\Delta_{D,\Om}}$, then $\Phi(-\Delta_{D,\Om_+})$ is also an integral operator with the kernel 
$K^\Phi_{-\Delta_{D,\Om_+}}$ given by
\begin{equation}\label{kerD}
K^\Phi_{-\Delta_{D,\Om_+}}(x,y)=K^\Phi_{-\Delta_{D,\Om}}(x,y)- K^\Phi_{-\Delta_{D,\Om}}(\s_v(x),y),\qquad x,y\in\Om_+.
\end{equation}
\end{theorem}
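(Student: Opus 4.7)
The plan is to unitarily decompose $L^2(\Om)$ under the reflection $\sigma_v$ into symmetric/antisymmetric subspaces and to identify the restrictions of $-\Delta_{N/D,\Om}$ there with $-\Delta_{N/D,\Om_+}$; the kernel formulas will then follow by spectral calculus. Let $U_v f = f \circ \sigma_v$ on $L^2(\Om)$; since $\sigma_v$ is an isometric measure-preserving involution that preserves $\Om$, the operator $U_v$ is a unitary involution. By the chain rule for weak derivatives, $U_v$ preserves $H^1(\Om)$ and the form $\mathfrak{t}_\Om$; from $\sigma_v(C^\infty_c(\Om)) = C^\infty_c(\Om)$ and density, it also preserves $H^1_0(\Om)$. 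Hence $U_v$ commutes strongly with both $-\Delta_{N,\Om}$ and $-\Delta_{D,\Om}$, and therefore with $\Phi(-\Delta_{N/D,\Om})$ by spectral calculus. Decompose $L^2(\Om) = \mathcal H_+ \oplus \mathcal H_-$ into the $(\pm 1)$-eigenspaces of $U_v$, and define unitary maps $J_\pm : L^2(\Om_+) \to \mathcal H_\pm$ by $(J_\pm f)(x) = 2^{-1/2} f(x)$ for $x \in \Om_+$ and $(J_\pm f)(x) = \pm 2^{-1/2} f(\sigma_v(x))$ for $x \in \Om_- := \sigma_v(\Om_+)$.

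The key identifications are $J_+^* (-\Delta_{N,\Om}) J_+ = -\Delta_{N,\Om_+}$ and $J_-^* (-\Delta_{D,\Om}) J_- = -\Delta_{D,\Om_+}$, where the left-hand sides refer to the (reducing) restrictions of $-\Delta_{N/D,\Om}$ to $\mathcal H_\pm$. By uniqueness of the self-adjoint operator associated with a closed form, it suffices to match form domains and form values. For the Neumann case, one verifies that $J_+$ restricts to a bijection between $H^1(\Om_+)$ and $\mathcal H_+ \cap H^1(\Om)$: the nontrivial direction is that the symmetric extension of an $H^1(\Om_+)$ function lies in $H^1(\Om)$ (continuity across the hyperplane precludes a distributional jump), with weak gradient equal to the antisymmetric extension of the gradient; this then gives $\mathfrak{t}_\Om[J_+ f, J_+ f] = \mathfrak{t}_{\Om_+}[f,f]$. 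For the Dirichlet case, $J_-$ bijects $H^1_0(\Om_+)$ onto $\mathcal H_- \cap H^1_0(\Om)$: antisymmetric extensions of $C^\infty_c(\Om_+)$ functions lie in $C^\infty_c(\Om \setminus \langle v\rangle^\bot) \subset C^\infty_c(\Om)$, giving one inclusion by density, while antisymmetric $H^1$ functions automatically have vanishing trace on $\langle v\rangle^\bot$, giving the reverse.

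Conjugation by $J_\pm$ then intertwines the functional calculus, so $J_\pm^* \Phi(-\Delta_{N/D,\Om}) J_\pm = \Phi(-\Delta_{N/D,\Om_+})$. For $f \in L^2(\Om_+)$ and $x \in \Om_+$,
\begin{align*}
\Phi(-\Delta_{N,\Om_+}) f(x) &= \sqrt{2}\,\bigl(\Phi(-\Delta_{N,\Om})(J_+ f)\bigr)(x) \\
&= \int_{\Om_+} K^\Phi_{-\Delta_{N,\Om}}(x,y) f(y)\,dy + \int_{\Om_-} K^\Phi_{-\Delta_{N,\Om}}(x,y) f(\sigma_v(y))\,dy,
\end{align*}
and the substitution $y \mapsto \sigma_v(y)$ in the second integral produces the kernel $K^\Phi_{-\Delta_{N,\Om}}(x,y) + K^\Phi_{-\Delta_{N,\Om}}(x,\sigma_v(y))$. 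The kernel symmetry $K^\Phi_{-\Delta_{N,\Om}}(x,\sigma_v(y)) = K^\Phi_{-\Delta_{N,\Om}}(\sigma_v(x),y)$, a direct consequence of $U_v$-commutation combined with $\sigma_v$ being measure-preserving, converts this into \eqref{kerN}. The Dirichlet case is parallel, with the minus sign in \eqref{kerD} coming from the antisymmetric extension. I expect the principal technical obstacle to be the form-domain identifications above, particularly the Dirichlet-antisymmetric case, where the approximation of antisymmetric $H^1_0(\Om)$ functions by antisymmetric extensions of $C^\infty_c(\Om_+)$ functions across the reflection hyperplane must be carried out carefully.
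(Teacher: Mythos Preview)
Your proposal is correct and follows essentially the same strategy as the paper: identify the form domains $H^1(\Omega_+)$ and $H^1_0(\Omega_+)$ with the even and odd parts of $H^1(\Omega)$ and $H^1_0(\Omega)$ respectively (this is the paper's Lemma~2.1, and is exactly the technical obstacle you flag), deduce the corresponding identification of the Laplacians (the paper's Proposition~2.2), transfer via spectral calculus, and compute the kernel using the symmetry $K^\Phi(\sigma_v(x),y)=K^\Phi(x,\sigma_v(y))$. The one packaging difference is that you phrase the transfer through unitary equivalence $J_\pm^*\bigl((-\Delta_{N/D,\Omega})|_{\mathcal H_\pm}\bigr)J_\pm = -\Delta_{N/D,\Omega_+}$, whereas the paper uses the non-unitary bounded map $B\colon F\mapsto F_{\rm even}$ from $L^2(\Omega)$ to $L^2(\Omega_+)$ and invokes a two-Hilbert-space intertwining version of the functional calculus (discussed there at some length, including a reduction to the one-space case via Berberian's trick); your unitary formulation is marginally cleaner, since functional calculus under unitary conjugation is immediate and no domain-inclusion issues arise.
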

As a direct corollary of Theorem \ref{thm:main} we obtain the following identities that can be called the \textit{reflection principles} for the Neumann and Dirichlet heat kernels.
\begin{corollary} \label{cor:one}
Let $\Om$ be an open subset of $\R$ symmetric with respect to $\langle v\rangle^\bot$  and let  $p_t^{N,\Om_+}$ and $p_t^{D,\Om_+}$, and $p_t^{N,\Om}$ and $p_t^{D,\Om}$, denote the Neumann and the Dirichlet heat kernels on $\Om_+$ and $\Om$, respectively.
Then
\begin{equation}\label{ker1}
p_t^{N,\Om_+}(x,y)=p_t^{N,\Om}(x,y)+p_t^{N,\Om}(\s_v(x),y) ,\qquad x,y\in\Om_+,\quad t>0,
\end{equation}
and
\begin{equation}\label{ker2}
p_t^{D,\Om_+}(x,y)=p_t^{D,\Om}(x,y)-p_t^{D,\Om}(\s_v(x),y) ,\qquad x,y\in\Om_+,\quad t>0.
\end{equation}
\end{corollary}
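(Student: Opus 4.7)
The plan is to deduce Corollary \ref{cor:one} as an immediate specialization of Theorem \ref{thm:main} by choosing the exponential functional. First I would fix $t>0$ and take the Borel function $\Phi_t\colon[0,\infty)\to\mathbb R$ defined by $\Phi_t(u)=\exp(-tu)$. By the spectral theorem applied to the non-negative self-adjoint operator $-\Delta_{N,\Om}$, one has
\[
\Phi_t(-\Delta_{N,\Om})=\exp\bigl(-t(-\Delta_{N,\Om})\bigr),
\]
which is precisely the Neumann heat semigroup operator at time $t$ introduced earlier in the excerpt. By the discussion preceding Theorem \ref{thm:main}, this operator is an integral operator on $L^2(\Om)$ whose kernel is exactly $p_t^{N,\Om}(x,y)$. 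Hence the hypothesis of Theorem \ref{thm:main} is satisfied with $K^{\Phi_t}_{-\Delta_{N,\Om}}(x,y)=p_t^{N,\Om}(x,y)$.

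Next I would observe that $\Om_+$ is itself an open subset of $\R$ (since $\Om$ is open and $\Om_+$ is defined as its intersection with the open half-space $\{x\in\R:v\cdot x>0\}$, as recalled in Section \ref{sec:prel}), so the same general theory applies to $\Om_+$. In particular, $\exp(-t(-\Delta_{N,\Om_+}))$ is again an integral operator whose kernel is $p_t^{N,\Om_+}(x,y)$. Applying the conclusion (\ref{kerN}) of Theorem \ref{thm:main} with $\Phi=\Phi_t$, one immediately obtains
\[
p_t^{N,\Om_+}(x,y)=p_t^{N,\Om}(x,y)+p_t^{N,\Om}(\s_v(x),y),\qquad x,y\in\Om_+,
\]
which is (\ref{ker1}). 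The identity (\ref{ker2}) for the Dirichlet heat kernel follows identically by using (\ref{kerD}) instead, after noting that $\exp(-t(-\Delta_{D,\Om}))$ has kernel $p_t^{D,\Om}$.

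There is essentially no obstacle here, since Corollary \ref{cor:one} is a pure specialization of Theorem \ref{thm:main}; the only point that deserves a brief mention is that the pointwise (rather than merely almost everywhere) identity on $\Om_+\times\Om_+$ is legitimate because the heat kernels $p_t^{N,\Om}$, $p_t^{D,\Om}$, $p_t^{N,\Om_+}$, and $p_t^{D,\Om_+}$ are all $C^\infty$ on their respective domains, so the equality of the two sides as integral kernels (which Theorem \ref{thm:main} provides almost everywhere) automatically upgrades to a pointwise equality on $\Om_+\times\Om_+$ by continuity.
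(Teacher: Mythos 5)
Your proposal is correct and is exactly the paper's route: the authors obtain Corollary \ref{cor:one} as a direct specialization of Theorem \ref{thm:main} to $\Phi_t(u)=e^{-tu}$, using that the heat semigroups are integral operators with kernels $p_t^{N/D,\Om}$. Your closing remark on upgrading the a.e.\ identity to a pointwise one via smoothness of the heat kernels is a sensible extra precision that the paper leaves implicit.
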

The paper is organized as follows. Section \ref{sec:prel} is devoted to the statements and proofs of auxiliary results and the proof of Theorem \ref{thm:main}.
In Section \ref{sec:prob} we use a probabilistic approach to verify \eqref{ker2}. Finally, in Section \ref{sec:ex} we first show how to extend Theorem \ref{thm:main}
to a more complex setting of multiple reflections associated to an orthogonal root system. Then we discuss several applications of Theorem \ref{thm:main}
by considering resolvents, Riesz potential operators and heat semigroups associated to the Neumann/Dirichlet Laplacians on open sets in $\R$ that result in reflection
principle formulas for the corresponding integral kernels. These include resolvent kernels and thus also Green's functions, Riesz potential kernels, and heat kernels.
We also discuss concrete examples. In particular, we recover \eqref{ker1} and \eqref{ker2} for several symmetric open sets $\Om$
by comparing formulas for the Neumann/Dirichlet heat kernels for $-\Delta_{N/D,\Om}$ and $-\Delta_{N/D,\Om_+}$ which are known to be given in terms of series.

\section{Preliminaries and proofs of main results} \label{sec:prel}

Given a vector $0\neq v\in\R$ let $\s_v$ denote the orthogonal reflection with respect to  the hyperplane $\langle v\rangle^\bot$ perpendicular to $v$,
$$
\s_v(x)=x-2\frac{\langle v,x\rangle}{\|v\|^2} v,  \qquad x\in\R.
$$
If $d=1$, then the "hyperplane" reduces to $\{0\}$ and $\s_v(x)=-x$.

Let $\Om$ be an open set in $\R$ symmetric in $\langle v\rangle^\bot$, that is $\s_v(\Om)=\Om$. We distinguish the positive part of $\Om$ by setting
$$
\Om_+=\{\omega\in\Om\colon \langle\omega,v\rangle>0\}.
$$
Given a function $f$ on $\Om_+$ we define $\EE f$ and $\OO f$, its even and odd extensions on $\Om$ with respect to $\langle v\rangle^\bot$, 
by setting for $x\in \Om_-:=\s_v(\Om_+)$,
$$
\EE f(x)=f(\s_v(x))\qquad {\rm and} \qquad \OO f(x)=-f(\s_v(x)).
$$
On the set $\Om\cap \langle v\rangle^\bot$ of Lebesgue measure zero, the definitions of both extensions are immaterial but, if necessary, for instance for $\OO f$, we can set  
$\OO f(x)=0$ for $x\in\Om\cap \langle v\rangle^\bot$. Also, for a function $F$ on $\Om$, by $F_{\rm even}$ and $F_{\rm odd}$ 
we mean the even and odd parts of $F$ (with respect to $\langle v\rangle^\bot$), 
$$F_{\rm even}(x)=(F(x)+F(\s_v(x))/2\qquad {\rm and} \qquad  F_{\rm odd}(x)=(F(x)-F(\s_v(x))/2;
$$
if not otherwise stated, we consider $F_{\rm even}$ and $F_{\rm odd}$ as restrictions to $\Om_+$, hence treat them as functions on $\Om_+$. 

In what follows we shall use, without further mentioning, the following identities,
\begin{equation*}
\int_{\Om}\EE \phi\cdot \Psi=2\int_{\Om_+}\phi\cdot \Psi_{\rm even},\qquad \int_{\Om}\OO \phi\cdot \Psi=2\int_{\Om_+}\phi\cdot \Psi_{\rm odd};
\end{equation*}
here $\phi$ and $\Psi$ are suitable functions on $\Om_+$ and $\Om$, respectively. Also, if $V$ is a linear space of functions on $\Om$, then by $V_{\rm even}$ and 
$V_{\rm odd}$ we denote the linear space of functions on $\Om_+$ consisting of even and odd parts of 
functions from $V$, respectively.

\begin{lemma} \label{lem:first}
We have
\begin{equation}\label{H1}
H^1(\Om_+)=\big(H^1(\Om)\big)_{\rm even}\qquad and \quad H^1_0(\Om_+)=\big(H^1_0(\Om)\big)_{\rm odd}.
\end{equation}
\end{lemma}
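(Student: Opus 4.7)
The plan is to pick coordinates so that $v=e_d$, whence $\s_v(x)=(x_1,\ldots,x_{d-1},-x_d)$ and $\Om_+=\Om\cap\{x_d>0\}$, and then to verify both inclusions in each of the two identities. Because $\s_v$ is a measure-preserving diffeomorphism of $\Om$, the map $F\mapsto F\circ\s_v$ is an isometric automorphism of $H^1(\Om)$ that also preserves $H^1_0(\Om)$ (it sends $C^\infty_c(\Om)$ to itself), so the decomposition into even and odd parts is well defined on both spaces. The inclusion $(H^1(\Om))_{\rm even}\subseteq H^1(\Om_+)$ is immediate by restriction.

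For the reverse inclusion $H^1(\Om_+)\subseteq(H^1(\Om))_{\rm even}$ I would show directly that, for $f\in H^1(\Om_+)$, the even extension $\EE f$ lies in $H^1(\Om)$ with weak derivatives $\partial_j\EE f=\EE(\partial_jf)$ for $j<d$ and $\partial_d\EE f=\OO(\partial_df)$. Testing $\EE f$ against $\phi\in C^\infty_c(\Om)$ and folding the $\Om_-$-integral onto $\Om_+$ via $x\mapsto\s_v(x)$ yields
\[
\int_{\Om}\EE f\,\partial_j\phi=2\int_{\Om_+}f\,\partial_j\phi_{\rm even}\quad(j<d),\qquad\int_{\Om}\EE f\,\partial_d\phi=2\int_{\Om_+}f\,\partial_d\phi_{\rm odd}.
\]
Because $\phi_{\rm even}|_{\Om_+}$ and $\phi_{\rm odd}|_{\Om_+}$ are smooth but not compactly supported in $\Om_+$, the definition of weak derivative does not apply directly, and I would insert a smooth cutoff $\chi_\epsilon(x)=\chi(x_d/\epsilon)$ with $\chi(t)=0$ for $t\le 1$ and $\chi(t)=1$ for $t\ge 2$, so that $\chi_\epsilon\phi_{\rm even}$ and $\chi_\epsilon\phi_{\rm odd}$ belong to $C^\infty_c(\Om_+)$ and integration by parts against $f$ is legitimate. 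For $j<d$ one has $\partial_j\chi_\epsilon\equiv 0$ and dominated convergence as $\epsilon\to 0$ gives the desired identity; for $j=d$ an extra term $\int_{\Om_+}f\,(\partial_d\chi_\epsilon)\phi_{\rm odd}$ appears, which must be shown to vanish. Using that $\phi_{\rm odd}$ is a smooth odd function with compact support, one has $|\phi_{\rm odd}(x)|\le Cx_d$ on a compact set, so on the strip $\{\epsilon\le x_d\le 2\epsilon\}$ the factor $\partial_d\chi_\epsilon=O(1/\epsilon)$ is absorbed into $|\phi_{\rm odd}|=O(\epsilon)$, leaving a uniformly bounded integrand on a set whose measure vanishes, and the error tends to $0$ by dominated convergence.

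For the second identity, the inclusion $H^1_0(\Om_+)\subseteq(H^1_0(\Om))_{\rm odd}$ is short: given $f\in H^1_0(\Om_+)$ and approximants $\phi_n\in C^\infty_c(\Om_+)$, each odd extension $\OO\phi_n$ lies in $C^\infty_c(\Om)$ because $\phi_n$ is compactly supported away from the hyperplane, and the identity $\|\OO g\|_{H^1(\Om)}^2=2\|g\|_{H^1(\Om_+)}^2$ forces $\OO\phi_n\to\OO f$ in $H^1_0(\Om)$. Conversely, given an odd $F\in H^1_0(\Om)$ approximated by $\psi_n\in C^\infty_c(\Om)$, one passes to $(\psi_n)_{\rm odd}\in C^\infty_c(\Om)$, which still converges to $F$; the same cutoff $\chi_\epsilon$ applied to $(\psi_n)_{\rm odd}|_{\Om_+}$, together with the linear-vanishing estimate at $\{x_d=0\}$, places each $(\psi_n)_{\rm odd}|_{\Om_+}$ in $H^1_0(\Om_+)$, and closedness of this subspace in $H^1(\Om_+)$ gives $F|_{\Om_+}\in H^1_0(\Om_+)$. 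The main obstacle I expect is precisely the $j=d$ case of the cutoff limit in the previous paragraph: the entire extension principle rests on the delicate cancellation between the linear smallness of the odd test function at the hyperplane and the $1/\epsilon$ singularity of the normal derivative of $\chi_\epsilon$ in a shrinking strip.
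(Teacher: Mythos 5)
Your proposal is correct and follows essentially the same route as the paper's proof: reduce to $v=e_d$, compute the weak derivatives of the even extension by testing against $\varphi\in C^\infty_c(\Om)$ with a cutoff $\eta(kx_d)$ near the hyperplane, control the normal-derivative term via the bound $|\varphi_{\rm odd}|\le M|x_d|$, and handle $H^1_0$ by odd extension of approximants in one direction and by the same cutoff applied to restrictions of odd test functions in the other. The only cosmetic difference is your $\epsilon$-parametrized cutoff versus the paper's $\eta_k$, and your appeal to closedness of $H^1_0(\Om_+)$, which the paper uses implicitly.
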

\begin{proof}
In the case $d=1$, both identities in \eqref{H1} immediately follow from known characterizations of $H^1(I)$ and $H^1_0(I)$, where $I\subset\mathbb R$ is an open interval 
(see, for instance, \cite[Appendix E]{Sch}). Thus, we can assume that $d\ge2$.

Since the Laplacian is rotationally invariant, in what follows without any loss of generality, but only for the sake of simplicity, we can assume 
(and we do this in the proof of Proposition \ref{pro:one}) that $v$ is the $d$th unit vector $v_d=(0,\ldots,0,1)$. Thus, for a  
given function $f$ on $\Om_+$ its even and odd extensions on $\Om$ with respect to the $d$th variable are 
$$
\EE f(\check x,x_d)=f(\check x,|x_d|)\qquad {\rm and} \qquad \OO f(\check x,x_d)={\rm sgn} (x_d)\, f(\check x,|x_d|),
$$
for $x=(\check x,x_d)\in\Om$, $x_d\neq0$. 
Also, for a function $F$ on $\Om$,  the even and odd parts of $F$ (with respect to the $d$th variable), are
$$F_{\rm even}(x)=(F(\check x,x_d)+F(\check x,-x_d))/2\qquad {\rm and} \qquad  F_{\rm odd}(x)=(F(\check x,x_d)-F(\check x,-x_d))/2.
$$
Recall, that we treat $F_{\rm even}$ and $F_{\rm odd}$ as the restrictions to $\Om_+$.

We begin with the first identity in \eqref{H1} and, proving the inclusion $\subset$  we follow the proof of \cite[Lemma 9.2]{Br} (see also \cite[Lemma 7.1.2]{D2}); we include details for the sake of completeness. 
Take $f\in H^1(\Om_+)$. We show that $\partial_j(\EE f)=\EE(\partial_i f)$, for $j=1,\ldots,d-1$, and $\partial_d(\EE f)=\OO(\partial_d f)$, which means that $\EE f\in H^1(\Om)$.
Fix $\v\in C^\infty_c(\Om)$ and let ${\rm supp}\,\v\subset B_R(0)$. Let $\eta\in C^\infty(0,\infty)$ be such that $\eta(t)=0$ for $0<t<1/2$ and $\eta(t)=1$ for $t>1$. Let $\eta_k(t)=\eta(kt)$.
For  $j=1,\ldots,d-1$ we write 
\begin{equation}\label{aux2}
\int_{\Om}\EE f\,\partial_j\v=2\int_{\Om_+}f\,\partial_j(\v_{\rm even}).
\end{equation}
Clearly, $\v_{\rm even}$ not necessarily is in $C^\infty_c(\Om_+)$, but $(\eta_k\v_{\rm even})(x):=\eta_k(x_d)\v_{\rm even}(x)$,  $x=(\check x,x_d)\in\Om_+$, is. Hence
$$
\int_{\Om_+}f\,\partial_j(\eta_k\v_{\rm even})=-\int_{\Om_+}\partial_jf\,\eta_k\v_{\rm even}.
$$
Noticing that $\partial_j(\eta_k\v_{\rm even})=\eta_k\,\partial_j(\v_{\rm even})$,  then letting $k\to \infty$ and using the Lebesgue dominated convergence theorem gives 
$$
\int_{\Om_+}f\,\partial_j(\v_{\rm even})=-\int_{\Om_+}\partial_jf\,\v_{\rm even}.
$$
Now we rewrite \eqref{aux2} to 
\begin{equation*}
\int_{\Om}\EE f\,\partial_j\v=-2\int_{\Om_+}\partial_j f\,\v_{\rm even}=-\int_{\Om}\EE(\partial_j f)\,\v.
\end{equation*}
This means that the weak $j$th derivative  of $\EE f$ in $L^2(\Om)$ is $\EE(\partial_j f)$. 

To treat the case $j=d$ we write
\begin{equation*}
\int_{\Om}\EE f\,\partial_d\v=2\int_{\Om_+}f\,\partial_d(\v_{\rm odd}).
\end{equation*}
Since $\v_{\rm odd}(\check{x},0)=0$ for $(\check{x},0)\in\Om$, and ${\rm supp}\,\v\subset B_R(0)$ for some $R>0$, hence there exists $M>0$ such that for $(\check{x},x_d)\in\Om$ we have
$$
|\v_{\rm odd}(\check{x},x_d)|\le M|x_d|.
$$
Clearly, $\v_{\rm odd}$  not necessarily is in $C^\infty_c(\Om_+)$, however $\eta_k\v_{\rm odd}$ is. Therefore,
$$
\int_{\Om_+}f\,\partial_d(\eta_k\v_{\rm odd})=-\int_{\Om_+}\partial_df\,\eta_k\v_{\rm odd}.
$$
But $\partial_d(\eta_k\v_{\rm odd})= \eta_k\partial_d \v_{\rm odd}+ k\eta'(kx_d)\v_{\rm odd}$ and we claim that 
 \begin{equation}\label{aux3}
k\int_{\Om_+}f\, \eta'(kx_d)\v_{\rm odd}\to 0
\end{equation}
with $k\to\infty$. Indeed, if $C=\|\eta'\|_\infty$, then 
$$
k\Big|\int_{\Om_+}f\, \eta'(kx_d)\v_{\rm odd}\Big|\le kCM\int_{A(k,R)}|f|x_d\,dx\le CM\int_{A(k,R)}|f|\,dx,
$$
where $A(k,R)= B_R(0)\cap\{(\check x,x_d)\in \Om_+\colon 0<x_d<1\slash k\}$,
and the last quantity tends to 0 as $k\to\infty$. This means that letting $k\to\infty$ shows that
$$
\int_{\Om_+}f\,\partial_d(\v_{\rm odd})=-\int_{\Om_+}\partial_df\,\v_{\rm odd}
$$
and hence 
\begin{equation*}
\int_{\Om}\EE f\,\partial_d\v=-2\int_{\Om_+}\partial_i f\,\v_{\rm odd}=-\int_{\Om}\OO(\partial_d f)\,\v.
\end{equation*}
This proves that the weak $d$th derivative  of $\EE f$ in $L^2(\Om)$ is $\OO(\partial_d f)$ and finishes the proof of
the inclusion $\subset$ in  \eqref{H1}. 

To prove the opposite inclusion for the first identity in  \eqref{H1}, take $F\in H^1(\Om)$. Without any loss of generality we can 
assume that $F$ is even (otherwise, take $F_{\rm even}$ treated at this moment as a function on $\Omega$; clearly, $F_{\rm even}\in H^1(\Om)$ and even parts of $F$ and $F_{\rm even}$ 
coincide). Since $C^\infty_c(\Om_+)\subset C^\infty_c(\Om)$, hence $F_{\rm even}=F|_{\Om_+}\in H^1(\Om_+)$. 

We now pass to the second identity in \eqref{H1} and prove the inclusion $\subset$. Fix $f\in H^1_0(\Om_+)$ and take a sequence $\v_n\in C^\infty_c(\Om_+)$ 
such that $\v_n\to f$ in $H^1(\Om_+)$. This means, in particular, that $\{\v_n\}$ is a Cauchy sequence in $H^1(\Om_+)$, and hence $\{\OO\v_n\}$ is a Cauchy sequence in $H^1(\Om)$.
Let $F$ be the limit of $\OO\v_n$ in $H^1(\Om)$. Since $\OO\v_n\in C^\infty_c(\Om)$, we have that $F\in H^1_0(\Om)$. It is also clear that $F$ is odd on $\Om$ and $F|_{\Om_+}=f$. This shows that $f\in (H^1_0(\Om))_{\rm odd}$.

To prove the opposite inclusion for the second identity in  \eqref{H1} first note that if $F\in H^1(\Om)$, then $F|_{\Om_+}\in H^1(\Om_+)$. This is because $C^\infty_c(\Om_+)\subset C^\infty_c(\Om)$, 
and hence  $\partial_j(F|_{\Om_+})=(\partial_j f)|_{\Om_+}$, $j=1,\ldots,d$. Thus, if we fix  $F\in H^1_0(\Om)$, then $F_{\rm odd}\in H^1(\Om_+)$.
It remains to verify that $F_{\rm odd}\in H^1_0(\Om_+)$. Take a sequence $\Phi_n\in  C^\infty_c(\Om)$ such that $\Phi_n\to F$ in $H^1(\Om)$. Then also $(\Phi_n)_{\rm odd}\to F_{\rm odd}$ in $H^1(\Om)$. 
Now it suffices to check that if $\Phi\in  C^\infty_c(\Om)$ is odd, then $\v:=\Phi|_{\Om_+}$ can be approximated by functions from $C^\infty_c(\Om_+)$ in $H^1(\Om_+)$. Fix such $\Phi$, 
take the same $\eta$ as before, consider $\eta_k\,\v\in C^\infty_c(\Om_+)$ 
and write
$$
\|\eta_k\v-\v\|_{H^1(\Om_+)}^2=\|\v(\eta_k-1)\|^2_{L^2(\Om_+)}+\sum_{j=1}^d\|\partial_j(\v(\eta_k-1))\|^2_{L^2(\Om_+)}.
$$
It is clear that $\|\v(\eta_k-1)\|^2_{L^2(\Om_+)}\to0$ as $k\to \infty$. For the remaining terms note that for $j=1,\ldots,d-1$ we have $\partial_j(\v(\eta_k-1))=(\partial_j\v)(\eta_k-1))$ and hence 
again $\|\partial_j(\v(\eta_k-1))\|^2_{L^2(\Om_+)}\to0$ as $k\to \infty$. For $j=d$, 
$$\partial_d(\v(\eta_k-1))=(\partial_d\v)(\eta_k-1))+k\eta'(kx_d)\v.$$
Therefore it remains to check that
$$
k^2\int_{\Om_+}|\eta'(kx_d)\v(x)|^2\,dx \to 0
$$
as $k\to \infty$. But this is done by an argument analogous to that used for \eqref{aux3}. This finishes  the proof 
the second identity in \eqref{H1} and completes the proof of Lemma \ref{lem:first}. 
\end{proof}
It is worth noticing here that for $f\in H^1(\Omega_+)$, if $F\in H^1(\Omega)$ is such that $f= F_{\rm even}$, then we can simply assume that $F$ is even (see a comment in the proof of
Lemma \ref{lem:first}). Analogous remark applies to $f\in H^1_0(\Omega_+)$.

Recall that in the setting of a sesquilinear form $\mathfrak{t}$ with domain $\D(\mathfrak{t})$, defined on a Hilbert space $(\mathcal H, \langle\cdot,\cdot\rangle)$, the associated operator
$A_\mathfrak{t}$ is defined by $A_\mathfrak{t}h=u_h$, where $h\in \D(A_\mathfrak{t})$ and
$$
\D(A_\mathfrak{t})=\{h\in \D(\mathfrak{t})\colon \exists u_h\in \mathcal H\,\,\forall h'\in \D(\mathfrak{t})\,\, \mathfrak{t}[h,h']=\langle u_h,h'\rangle\}.
$$

\begin{propo} \label{pro:one}
We have
\begin{equation}\label{dom1}
\D(-\Delta_{N,\Om_+})=\big(\D(-\Delta_{N,\Om})\big)_{\rm even}
\end{equation}
and
\begin{equation}\label{for1}
(-\Delta_{N,\Om_+})(F_{\rm even})=\big((-\Delta_{N,\Om})F\big)_{\rm even}, \quad {\rm for}\quad F\in \D(-\Delta_{N,\Om}).
\end{equation}
Similarly,
\begin{equation}\label{dom2}
\D(-\Delta_{D,\Om_+})=\big(\D(-\Delta_{D,\Om})\big)_{\rm odd}
\end{equation}
and
\begin{equation}\label{for2}
(-\Delta_{D,\Om_+})(F_{\rm odd})=\big((-\Delta_{D,\Om})F\big)_{\rm odd}, \quad {\rm for}\quad F\in \D(-\Delta_{D,\Om}).
\end{equation}
\end{propo}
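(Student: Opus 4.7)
The strategy is to deduce Proposition~\ref{pro:one} directly from Lemma~\ref{lem:first} and the variational characterization of the operator associated with a Hermitian form. The first step is to establish the two form identities
\begin{equation*}
\mathfrak{t}_{N,\Om}[\EE f,\EE g]=2\,\mathfrak{t}_{N,\Om_+}[f,g],\qquad \mathfrak{t}_{D,\Om}[\OO f,\OO g]=2\,\mathfrak{t}_{D,\Om_+}[f,g],
\end{equation*}
for $f,g\in H^1(\Om_+)$ and $f,g\in H^1_0(\Om_+)$ respectively. These follow by expanding the gradients via the componentwise rules $\partial_j\EE f=\EE(\partial_j f)$ for $j<d$ and $\partial_d\EE f=\OO(\partial_d f)$, together with the analogous rules for $\OO f$, all already contained in the proof of Lemma~\ref{lem:first}; one then collapses each integral using $\int_\Om \EE\phi\cdot\EE\psi=2\int_{\Om_+}\phi\psi$ and its odd counterpart.

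The second, crucial step is the parity observation that $\mathfrak{t}_\Om[F,G]=0$ whenever $F$ is even and $G$ is odd on $\Om$. Indeed, if $F$ is even then $\partial_jF$ is even for $j<d$ and $\partial_dF$ is odd; symmetrically, if $G$ is odd then $\partial_jG$ is odd for $j<d$ and $\partial_dG$ is even. Each summand $\partial_jF\cdot\overline{\partial_jG}$ is then odd under $\s_v$, so the full integrand vanishes after integration over the symmetric set $\Om$. Thus $\mathfrak{t}_\Om$ is diagonal with respect to the splitting $H^1(\Om)=(H^1(\Om))_{\rm even}\oplus(H^1(\Om))_{\rm odd}$, and likewise for $\mathfrak{t}_{D,\Om}$ relative to $H^1_0(\Om)$.

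With these preliminaries, \eqref{dom1} and \eqref{for1} follow by testing the variational identity against even test functions. For the inclusion $\supseteq$ in \eqref{dom1}, given $F\in\D(-\Delta_{N,\Om})$ I test the defining identity against $H=\EE g$ for arbitrary $g\in H^1(\Om_+)$; the $F_{\rm odd}$ contribution drops out by the parity observation and the surviving identity rewrites as
\begin{equation*}
2\,\mathfrak{t}_{N,\Om_+}[F_{\rm even},g]=2\,\langle\big((-\Delta_{N,\Om})F\big)_{\rm even},g\rangle_{L^2(\Om_+)},\qquad g\in H^1(\Om_+),
\end{equation*}
which simultaneously exhibits $F_{\rm even}\in\D(-\Delta_{N,\Om_+})$ and yields \eqref{for1}. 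For the reverse inclusion, given $f\in\D(-\Delta_{N,\Om_+})$ with $(-\Delta_{N,\Om_+})f=u$, I verify that $\EE f\in\D(-\Delta_{N,\Om})$ with $(-\Delta_{N,\Om})\EE f=\EE u$ by testing against an arbitrary $G\in H^1(\Om)$ decomposed into even and odd parts: the odd part is annihilated, while the even part reduces to the defining identity for $f$ via the form identity from step one. The Dirichlet statements \eqref{dom2} and \eqref{for2} are handled by the same scheme with $H^1$ replaced by $H^1_0$, $\EE$ by $\OO$, and even parts by odd parts, which is legitimate thanks to the second identity in Lemma~\ref{lem:first}. The only potentially delicate point, namely the behavior of weak derivatives of $\EE f$ and $\OO f$ across the hyperplane $\langle v\rangle^\bot$, has already been settled in Lemma~\ref{lem:first}, so the remaining argument is a clean parity-based manipulation inside the form calculus.
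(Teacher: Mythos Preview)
Your proposal is correct and follows essentially the same route as the paper: both arguments rest on Lemma~\ref{lem:first}, the derivative rules $\partial_j(\EE f)=\EE(\partial_j f)$ for $j<d$ and $\partial_d(\EE f)=\OO(\partial_d f)$, and the resulting form identity $\mathfrak{t}_{\Om}[\EE f,G]=2\,\mathfrak{t}_{\Om_+}[f,G_{\rm even}]$ (respectively $2\,\mathfrak{t}_{\Om_+}[F_{\rm even},g]=\mathfrak{t}_{\Om}[F,\EE g]$), from which the two inclusions and \eqref{for1} are read off via the variational definition of the associated operator. Your packaging via the parity observation $\mathfrak{t}_\Om[F,G]=0$ for $F$ even and $G$ odd is a mild reorganization---the paper instead computes $\mathfrak{t}_\Om[F,\EE g]$ directly for general $F$ without splitting---but the content is identical.
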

\begin{proof}
We consider only the case of the Neumann Laplacians and prove \eqref{dom1} and \eqref{for1}; the arguments leading to \eqref{dom2} and \eqref{for2} are analogous. 
For simplicity of notation till the end of this proof we write $-\Delta$ and $-\Delta_+$ instead of $-\Delta_{N,\Om}$ and $-\Delta_{N,\Om_+}$, correspondingly. 
Analogously, we write $\mathfrak{t}$ and $\mathfrak{t}_+$ rather than $\mathfrak{t}_{N,\Om}$ and $\mathfrak{t}_{N,\Om_+}$. Recall, that for simplicity we also assume that 
$v_d=(0,\ldots,0,1)$.

We first prove the inclusion $\subset$ in \eqref{dom1}. Take $f\in\D(-\Delta_+)$. Hence $f\in H^1(\Om_+)$ and there 
is $u_f\in L^2(\Om_+)$ such that for any $g\in H^1(\Om_+)$ we have
\begin{equation}\label{eqq1}
\mathfrak{t}_+[f,g]=\langle u_f,g\rangle_{L^2(\Om_+)},
\end{equation}
which also means that $(-\Delta_+)f=u_f$. Consider $\EE f$ which, by Lemma \ref{lem:first}, is in $H^1(\Om)$.
We shall verify that for every $G\in H^1(\Om)$ it holds
\begin{equation}\label{eqq2}
\mathfrak{t}[\EE f,G]=\langle \EE(u_f),G\rangle_{L^2(\Om)},
\end{equation}
which will mean that  $\EE f\in \D(-\Delta)$ and hence  $f\in \big(\D(-\Delta)\big)_{\rm even}$, 
and also that $(-\Delta)(\EE f)=\EE(u_f)$, which implies that $((-\Delta)F)_{\rm even}=(-\Delta_+)(F_{\rm even})$ for $F\in\D(-\Delta)$.

For any $G\in H^1(\Om)$ we have 
\begin{align*}
\mathfrak{t}[\EE f,G]
&=\sum_{j=1}^{d-1}\int_{\Om}\partial_j(\EE f)\,\overline{\partial_j G}+
\int_{\Om}\partial_d (\EE f)\,\overline{\partial_d G}\\
&=\sum_{j=1}^{d-1}\int_{\Om}\EE (\partial_j f)\,\overline{\partial_j G}+
\int_{\Om}\OO( \partial_d f)\,\overline{\partial_d G}\\
&=2\sum_{j=1}^{d-1}\int_{\Om_+}\partial_j f\,\overline{(\partial_j G)_{\rm even}}+
2\int_{\Om_+}\partial_d f\,\overline{(\partial_d G)_{\rm odd}}\\
&=2\sum_{j=1}^{d-1}\int_{\Om_+}\partial_j f\,\overline{\partial_j(G_{\rm even})}+
2\int_{\Om_+}\partial_d f\,\overline{\partial_d(G_{\rm even})}\\
&=2\mathfrak{t}_+[f,G_{\rm even}].
\end{align*}
On the other hand,
$$
\langle \EE(u_f),G\rangle_{L^2(\Om)}=2\langle u_f,G_{\rm even}\rangle_{L^2(\Om_+)},
$$
and hence inserting $G_{\rm even}$ for $g$ in \eqref{eqq1} gives \eqref{eqq2}; note that $G_{\rm even}\in H^1(\Om_+)$ by Lemma \ref{lem:first}.

To prove the opposite inclusion, take $F\in\D(-\Delta)$. Hence $F\in H^1(\Om)$ and there is $U_F\in L^2(\Om)$ such that for any $G\in H^1(\Om)$ we have
\begin{equation}\label{eq1}
\mathfrak{t}[F,G]=\langle U_F,G\rangle_{L^2(\Om)},
\end{equation}
which also means that $(-\Delta)F=U_F$. We shall verify that for every $g\in H^1(\Om_+)$ it holds
\begin{equation}\label{eq2}
\mathfrak{t}_+[F_{\rm even},g]=\langle (U_F)_{\rm even},g\rangle_{L^2(\Om_+)},
\end{equation}
which will mean that $F_{\rm even}\in \D(-\Delta_+)$ (note that $F_{\rm even}\in H^1(\Omega_+)$ by Lemma \ref{lem:first}) and that $(-\Delta_+)(F_{\rm even})
=((-\Delta)F)_{\rm even}$ for $F\in\D(-\Delta)$.

For any $g\in H^1(\Om_+)$ we have 
\begin{align*}
2\mathfrak{t}_+[F_{\rm even},g]&=2\sum_{j=1}^{d-1}\int_{\Om_+}\partial_j(F_{\rm even})\,\overline{\partial_j g}+2\int_{\Om_+}\partial_d(F_{\rm even})\,\overline{\partial_d g}\\
&=2\sum_{j=1}^{d-1}\int_{\Om_+}(\partial_j F)_{\rm even}\,\overline{\partial_j g}+2\int_{\Om_+}(\partial_d F)_{\rm odd}\,\overline{\partial_d g}\\
&=\sum_{j=1}^{d-1}\int_{\Om}\partial_j F\,\overline{\EE(\partial_j g)}+\int_{\Om}\partial_d F\,\overline{\OO(\partial_d g)}\\
&=\sum_{j=1}^{d-1}\int_{\Om}\partial_j F\,\overline{\partial_j(\EE g)}+\int_{\Om}\partial_d F\,\overline{\partial_d(\EE g)}\\
&=\mathfrak{t}[F,\EE g].
\end{align*}
On the other hand,
$$
2\langle (U_F)_{\rm even},g\rangle_{L^2(\Om_+)}=\langle U_F,\EE g\rangle_{L^2(\Om)},
$$
and hence inserting $\EE g$ for $G$ in \eqref{eq1} gives \eqref{eq2}; note that $\EE g\in H^1(\Om)$ by Lemma \ref{lem:first}. This completes the proof of \eqref{eq2} and thus the conclusion following it and hence finishes the proof of \eqref{dom1} and \eqref{for1}.
\end{proof}

Before proceeding to the proof of Theorem \ref{thm:main} we need to make some preparatory comments. It is well known that, as a part of the spectral theorem,
the following commuting property of the functional calculus holds: if $A$ is a self-adjoint operator on a Hilbert space $\mathcal H$ and $B$ is a bounded operator on 
$\mathcal H$ such that $BA\subset AB$, then also $B\Psi(A)\subset \Psi(A)B$, for any Borel function $\Psi$ on $\mathbb R$. In addition, if $\Psi$ is bounded, then $\Psi(A)$
is a bounded operator and the latter inclusion becomes the identity. See, for instance, \cite[Theorem 4.1 (d), p.\,323]{Co} specified to self-adjoint operators, or \cite{RS}.

We shall need the following two-Hilbert space and two-operator version of the above. Namely, if $A_1$ and $A_2$ are self-adjoint operators on  Hilbert spaces $\mathcal H_1$ and
$\mathcal H_2$, respectively, and $B\colon \mathcal H_1\to \mathcal H_2$ is a bounded operator such that $BA_1\subset A_2B$, then also
$B\Psi(A_1)\subset \Psi(A_2)B$, for any Borel function $\Psi$ on $\mathbb R$.
Again, if $\Psi$ is bounded, then the last inclusion becomes the identity. Such a version is known, at least as a folklore, but it is hard to find it in the literature in the above formulation.
However, see \cite[Proposition 5.15]{Sch}, where it is said that in the above mentioned setting the condition  $BA_1\subset A_2B$ is equivalent with the condition $BE_1(M)=E_2(M)B$, where $M$
is an arbitrary Borel subset of $\mathbb{R}$, and $E_i$ denotes the spectral measure corresponding to $A_i$, $i=1,2$. This equivalent condition easily implies the claim, that is
$B\Psi(A_1)\subset \Psi(A_2)B$, for any Borel function $\Psi$ on $\mathbb R$.

We take an opportunity to point out that the version we need can be also inherited from the usual property
of the functional calculus of one self-adjoint operator by considering the direct sum $A_1\oplus A_2$ on $\mathcal H_1\oplus \mathcal H_2$ and taking as a bounded operator
on $\mathcal H_1\oplus \mathcal H_2$ the operator $(x,y)\mapsto (0,Bx)$. (Checking that we indeed end up in the one-Hilbert space setting with all necessary assumptions satisfied, and the
conclusion from the one-Hilbert space version implies the desired inclusion is straightforward.)
This argument, that changes the intertwining condition onto the
commuting condition, is known as Berberian's trick; we owe this information to Professor Jan Stochel to whom we are very indebted.

\textbf{Proof of Theorem \ref{thm:main}.}
We consider only the case of the Neumann Laplacians and prove \eqref{kerN}; the arguments leading to \eqref{kerD} are analogous.
As in the proof of Proposition \ref{pro:one}, for simplicity of notation till the end of this proof we write $-\Delta$ and $-\Delta_+$ instead of $-\Delta_{N,\Om}$ and $-\Delta_{N,\Om_+}$,  and consequently, $\Phi(-\Delta)$ and $\Phi(-\Delta_+)$ instead of $\Phi(-\Delta_{N,\Om})$ and $\Phi(-\Delta_{N,\Om_+})$, correspondingly. Analogously, we write $K^\Phi$  rather than 
$K^\Phi_{-\Delta_{N,\Om}}$. Keeping in mind delicacies usually connected to domains of unbounded operators we decided to be slightly pedantic in what follows.

The reflection $\s_v$ induces a natural action on functions defined on $\Om$: if $F$ is such a function, then $\check F(x):= F(\s_v(x))$, $x\in\Om$.
As an easy calculation shows, the mapping $\check \,\colon L^2(\Om)\to L^2(\Om)$ leaves $\D(-\Delta)$ invariant, and hence it is a bijection on $\D(-\Delta)$.
This implies that 
\begin{equation}\label{com}
(-\Delta F)\check{\,}=-\Delta\check{F}, \qquad F\in\D(-\Delta).
\end{equation}
Thus, by the spectral theorem, also 
\begin{equation*}
(\Phi(-\Delta) F)\check{\,}=\Phi(-\Delta)\check{F}, \qquad F\in\D(\Phi(-\Delta)),
\end{equation*}
and, consequently, since $\D(\Phi(-\Delta))$ is dense in $L^2(\Om)$, for the kernel $K^\Phi$ we have
\begin{equation}\label{symm}
K^\Phi(\s_v(x),y)=K^\Phi(x,\s_v(y)),\qquad (x,y)\in\Om\times\Om\,-\, a.e.
\end{equation}
On the other hand, by using Proposition \ref{pro:one}, it is also clear that
\begin{equation*}
(-\Delta_{+})(F_{\rm even})=(-\Delta F)_{\rm even}, \qquad F\in\D(-\Delta)
\end{equation*}
and hence, the comment made above applied to $\mathcal H_1=L^2(\Om)$ and  $\mathcal H_2=L^2(\Om_+)$, $A_1=-\Delta$ and $A_2=-\Delta_{+}$, and $B\colon L^2(\Om)\to L^2(\Om_+)$ defined
by $F\mapsto F_{\rm even}$, gives for every $\Phi$
\begin{equation*}
\Phi(-\Delta_{+})(F_{\rm even})=(\Phi(-\Delta) F)_{\rm even}, \qquad F\in\D(\Phi(-\Delta))\/.
\end{equation*}

Thus, given $f\in \D(\Phi(-\Delta_+))$, take $F\in \D(\Phi(-\Delta))$ such that $F_{\rm even}=f$; we can assume that $F$ is even. Then for $x\in\Om_+$ we obtain
\begin{align*}
\Phi(-\Delta_+)f(x)
&=\frac12\big(\Phi(-\Delta)F(x)+\Phi(-\Delta)F(\s_v(x))\big)\\
&=\frac12\Big(\int_\Om K^\Phi(x,y)F(y)\,dy+\int_\Om K^\Phi(\s_v(x),y)F(y)\,dy\Big)\\
&=\frac12\Big(\int_{\Om_+} \Big[K^\Phi(x,y)+ K^\Phi(x,\s_v(y))\Big]f(y)\,dy\\
&\,\,\,\,\,\,\,\,\,\,+\int_{\Om_+} \Big[K^\Phi(\s_v(x),y)+ K^\Phi(\s_v(x),\s_v(y))\Big]f(y)\,dy\Big)\\
&=\int_{\Om_+} \Big[K^\Phi(x,y)+ K^\Phi(x,\s_v(y))\Big]f(y)\,dy,
\end{align*}
where, for the last identity, we used \eqref{symm} (and  $K^\Phi(\s_v(x),\s_v(y))=K^\Phi(x,y)$ that follows from \eqref{symm}).
This means that $\Phi(-\Delta_+)$ has an integral kernel and \eqref{kerN} takes place. The proof of Theorem \ref{thm:main} is completed.

\section{Probabilistic approach} \label{sec:prob}
The reflection principle appears in the theory of stochastic processes and refers to properties of a Wiener process (Brownian motion). Both theories are linked by the fact that the Laplace operator is the infinitesimal generator of a transition semigroup of the Wiener process and the operators $-\Delta_{D,\Omega}$ and $-\Delta_{N,\Omega}$ refer to a Wiener process killed upon leaving $\Omega$ and a reflected Wiener process, respectively. In this section we present the refection principle from the point of view of a killed Wiener process and strong Markov property. 

Let $W=(W_t)_{t\geq 0}$ be a $d$-dimensional Wiener process starting from $x\in\R$ and denote by $\pr^x$, $\ex^x$ and $\mathcal{F}=(\mathcal{F}_t)_{t\geq 0}$ the corresponding probability distribution, expecting value and the filtration generated by $W$. We will simply write $\pr$ and $\ex$ whenever $x=0$. Recall that $\pr^x$ is absolutely continuous with respect to Lebesgue measure and 
$$
\pr^x(W_t\in dy)/dy = \frac{1}{(2\pi t)^{d/2}}\exp\left(-\frac{\|x-y\|^2}{2t}\right)\/,
$$
which is just $p_{t/2}(x,y)$. To distinguish the probabilistic approach from the previous one we will write $\g_t(x,y):=p_{t/2}(x,y)$. 

For a given nonempty open set $\Omega\subset \R$ we define the first exit time of $W$ from $\set$ by
\begin{equation*}
  \tau_\set = \inf\{t: W_t\notin \set\}\/.
\end{equation*} 
Continuity of paths implies that $\tau_\set$ is an $(\mathcal{F}_t)$-Markov stopping time. We denote by $W^\set=(W^\set_t)_{t\geq 0}$ the process killed upon leaving the set $\set$ and write $\gD_t(x,y)$ for its transition density function, i.e. 
$$
\gD_t(x,y) = \ex^x[t<\tau_\set,W_t\in dy]/dy\/.
$$
By the strong Markov property, we can describe it in the following way 
$$
\gD_t(x,y) = \g_t(x,y)-\ex^x[t>\tau_\set; \g(t-\tau_\set,W_{\tau_\set},y)]\/,\quad x,y\in\set\/.
$$
The identity given above is often called the Hunt formula. 

The classical reflection principle in $\mathbb{R}$ is a consequence of a strong Markov property and it states that for a given stopping time $\tau$ the process
\begin{eqnarray*}
   W^\tau_t = \left\{
	\begin{array}{rl}
	   W_t\/,& t\leq \tau\\
		 2W_\tau-W_t\/, & t>\tau
	\end{array}
	\right.
\end{eqnarray*}
is also a Wiener process. Note that the paths of $W^\tau$ are glued with the original trajectory of $W_t$ (up to time $\tau$) and the trajectory reflected with respect to a line $y=W_\tau$ (after $\tau$). Applying the result to the special case $\tau=\inf\{t: W_t=a\}$, $a>0$, we obtain 
\begin{eqnarray*}
   \pr(\sup_{0\leq s\leq t}W_s\geq a) = 2\pr(W_t>a)\/.
\end{eqnarray*}
This essentially weaker relation is also often called the reflection principle.

We will study another consequence of the strong Markov property. We establish the relation between the transition density functions of an open set $\Om\subset\R$, 
which is symmetric in $\langle v\rangle^\bot$, and its positive part $\set_+$.
Let us also denote $\set_b = \set\setminus(\set_+\cup \set_-)$, where $\set_{-}=\s_v(\set_+)$.
\begin{propo}
  Let $\set$ and $\set_+$ be as described above. Then
   \begin{eqnarray*}
	    \g^{\set_+}_t(x,y) = \g^{\set}_t(x,y)-\g^{\set}_t(\sigma_v(x),y)\/,\quad x,y\in\set_+\/.
	\end{eqnarray*}
\end{propo}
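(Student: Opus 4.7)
The plan is to argue probabilistically using the strong Markov property at the first hitting time $T=\inf\{s\ge 0:\langle v,W_s\rangle=0\}$ of the reflecting hyperplane. Starting from $x\in\set_+$, continuity of paths gives $\tau_{\set_+}=T\wedge\tau_\set$ under $\pr^x$. For $y\in\set_+$ the event $\{t<\tau_\set,\,W_t\in dy\}$ splits according to $\{T>t\}$ versus $\{T\le t\}$, so
\[
\gD_t(x,y)=\pr^x(t<T,\,t<\tau_\set,\,W_t\in dy)/dy+\pr^x(T\le t,\,t<\tau_\set,\,W_t\in dy)/dy.
\]
The first summand is exactly $g^{\set_+}_t(x,y)$, so the task reduces to identifying the second summand with $\gD_t(\sigma_v(x),y)$.

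To do this I construct a reflected path $\widetilde W$ by setting $\widetilde W_s=W_s$ for $s\le T$ and $\widetilde W_s=\sigma_v(W_s)$ for $s>T$. Since $W_T\in\langle v\rangle^\bot$ is fixed by $\sigma_v$, the trajectory is continuous at $T$, and since $\sigma_v$ is an orthogonal linear map, the process $(W_{T+s}-W_T)_{s\ge 0}$ is a Wiener process starting at $0$ whose law is invariant under $\sigma_v$. The strong Markov property at $T$ then shows that $\widetilde W$ is again a Wiener process starting from $x$ with the same law as $W$. The symmetry $\sigma_v(\set)=\set$ ensures $\tau_\set(\widetilde W)=\tau_\set(W)$ on $\{T\le t\}$, and $\widetilde W_t=\sigma_v(W_t)$ on that event, hence
\[
\pr^x(T\le t,\,t<\tau_\set,\,W_t\in dy)=\pr^x(T\le t,\,t<\tau_\set,\,W_t\in \sigma_v(dy)).
\]
For $y\in\set_+$ the point $\sigma_v(y)$ lies in $\set_-$, so on $\{W_t\in \sigma_v(dy)\}$ the condition $T\le t$ is automatic by continuity of paths issuing from $x\in\set_+$. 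Therefore the last expression equals $\gD_t(x,\sigma_v(y))\,dy$, and the identity $\gD_t(x,\sigma_v(y))=\gD_t(\sigma_v(x),y)$, which follows from $\sigma_v(\set)=\set$ together with orthogonality of $\sigma_v$ applied inside the defining expectation, completes the argument.

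The main obstacle is the rigorous justification that $\widetilde W$ is a Wiener process: this is the essential content of the higher-dimensional reflection principle and requires a careful invocation of the strong Markov property combined with orthogonal invariance of Wiener increments, extending the one-dimensional variant recalled earlier in the section. Once this is in place, the remaining manipulations are bookkeeping with the symmetries of $\set$, the heat kernel, and the elementary observation that a path from $\set_+$ to $\set_-$ must cross the hyperplane.
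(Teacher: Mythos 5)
Your argument is correct and is essentially the paper's own proof: both split $\{t<\tau_\set\}$ according to whether the hyperplane has been reached by time $t$, reflect the post-hitting-time portion of the path via the strong Markov property and orthogonal invariance, and use $\sigma_v(\set)=\set$ to identify the reflected term with $\gD_t(\sigma_v(x),y)$. The only cosmetic difference is that you reflect at $T$, the hitting time of the hyperplane, while the paper reflects at $\tau_{\set_+}$; on the relevant event $\{\tau_{\set_+}\le t<\tau_\set\}$ these stopping times coincide, since $\tau_{\set_+}=T\wedge\tau_\set$.
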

\begin{proof}
Since $\set_+\subset\set$ we obviously have $\tau_{\set_+}\leq \tau_{\set}$ and for a given Borel set $A\subset \set_+$ we have
$$
\pr^x[t<\tau_{\set_+},W_t\in A] = \pr^x[t<\tau_\set,W_t\in A]-\pr^x[\tau_{\set_+}\leq t <\tau_\set, W_t\in A].
$$
	Note that $\tau_{\set_+}< \tau_{\set}$ if and only if $W_{\tau_{\set_+}}\in\set_b$ and consequently, using the strong Markov property, we get
$$
	   \pr^x[\tau_{\set_+}\leq t <\tau_\set, W_t\in A] = \pr^x[W_{\tau_{\set_+}}\in\set_b\/, \tau_{\set_+}\leq t\/, \ex^{W_{\tau_{\set_+}}}[t-\tau_{\set_+}<\tau_\set,W_{t-\tau_{\set_+}}\in A]].
$$
Note that $(W_{\tau_{\set_+}+s})_{s\geq 0}$ is a Wiener process starting from a point $W_{\tau_{\set_+}}\in\set_b$ and $(W_{\tau_{\set_+}+s})_{s\geq 0}-W_{\tau_{\set_+}}$ is independent from 
$\tau_{\set_+}$. Consequently $\sigma_v(W_{\tau_{\set_+}+s})$ is also a Wiener process starting from the same point. Moreover, the first exit time from $\set$ for both processes are the same 
due to the symmetry of $\set$. Since $\{\sigma_v(W_{t-\tau_{\set_+}})\in A\} = \{W_{t-\tau_{\set_+}}\in \sigma_v(A)\}$ we can simply rewrite the last above-given expression as
$$
\pr^x[W_{\tau_{\set_+}}\in\set_b\/, \tau_{\set_+}\leq t\/, \ex^{W_{\tau_{\set_+}}}[t-\tau_{\set_+}<\tau_\set,W_{t-\tau_{\set_+}}\in \sigma_v(A)]].
$$
	Thus, the strong Markov property implies that
$$
\pr^x[\tau_{\set_+}\leq t <\tau_\set, W_t\in A] =\pr^x[\tau_{\set_+}\leq t <\tau_\set, W_t\in \sigma_v(A)].
$$
	Note that we can drop the condition $\tau_{\set_+}\leq t$ since $\sigma_v(A)\subset \set_{-}$ and consequently $W_t\notin \set_+$ implies $\tau_{\set_+}<t$. Once again we can consider $\sigma_v(W)$ instead of $W$ and using the symmetry of $\set$ arrive at
  \begin{align*}
	  \pr^x[t<\tau_{\set_+},W_t\in A] &= \pr^x[t<\tau_\set,W_t\in A]-\pr^x[ t <\tau_\set, W_t\in \sigma_v(A)]\\
		&= \pr^x[t<\tau_\set,W_t\in A]-\pr^{\sigma_v(x)}[ t <\tau_\set, W_t\in A]\/,
	\end{align*}
	which ends the proof.
\end{proof}

\section{Applications and Examples} \label{sec:ex}
In this section we first extend Theorem \ref{thm:main} to the setting, where a single reflection in $\R$ is replaced by $k$ reflections $1\le k\le d$, associated with mutually 
orthogonal $k$ vectors. Then we discuss applications of Theorem \ref{thm:main}
including resolvents, Riesz potentials, Green's functions and heat semigroups associated to the Neumann/Dirichlet Laplacians on open sets of $\R$. These result in reflection principle formulas for the corresponding integral kernels, i.e. resolvent kernels, Riesz potential kernels, Green's functions, and heat kernels. We also discuss concrete examples for several symmetric open sets $\Om$ by comparing formulas for the Neumann/Dirichlet heat kernels for $-\Delta_{N/D,\Om}$ and $-\Delta_{N/D,\Om_+}$ which are known to be given in terms of series.

\subsection{Reflection principles for orthogonal root systems}
Theorem \ref{thm:main} easily leads to a corollary, where symmetries related to a reflection group associated with an orthogonal root system are involved.
Recall that a \textit{(normalized) root system} in $\R$ is a finite set $R$ of unit vectors such that $\s_\a(R)=R$ for every $\a\in R$. 
Clearly, $R\cap\mathbb{R}\a=\{\a,-\a\}$ for every $\a\in R$. The subgroup of $O(d)$ generated by the reflections $\s_\a$, $W=W(R)={\rm gp}\,\{\s_\a\colon \a\in R\}$, is called the \textit{finite reflection group}, or Weyl group, \textit{associated with} $R$. 
A choice of $\a\in \R$ such that $\langle \a,\check\a\rangle\neq0$ for every $\a\in R$, gives the partition $R=R_+\sqcup R_-$, where $R_+=\{\a\in R\colon \langle \a,\check\a\rangle>0\}$ and $R_-=\s_{\check\a}(R_+)$; 
$R_+$ is then referred to as the \textit{set of positive roots}. This partition distinguishes $C_+=\{x\in\R\colon \forall\, \a\in R_+ \,\,\,\langle x,\a\rangle>0\}$, which is called 
the \textit{positive Weyl chamber}. A root system $R$ is called orthogonal if $R_+$ is orthogonal as a set of vectors (this does not depend on the choice of $\check\a$). 
For a comprehensive treatment of the general theory of finite reflection groups the reader is kindly referred to \cite{Hu}.

Given an orthogonal root system $R$ in $\R$ with $R_+$ as a set of positive roots, without any loss of generality (possibly by rotating and permutating the coordinate axes) we can
assume that $R_+=\{e_1,\ldots,e_k\}$, where $k\in\{1,\ldots,d\}$, and $e_j$ is the $j$th coordinate unit vector. Thus, given $1\le k\le d$ let $R_+^{(k)}=\{e_j\colon j=1,\ldots,k\}$ be
the system of positive roots so that $R^{(k)}=R_+^{(k)}\sqcup(-R_+^{(k)})$ is the orthogonal root system in $\R$.
The corresponding positive Weyl chamber is $\R_{k,+}:=\{x\in\R\colon x_i>0\,\, {\rm for}\,\, i=1,\ldots,k\}$. Together with $\R_{k,+}$ consider the open sets
$$
\R_{j,+}:=\{x\in\R\colon x_i>0\,\, {\rm for}\,\, i=1,\ldots,j\},\qquad j=k-1,\ldots,0,
$$
so that $\R_{0,+}=\R$ and $\R_{j,+}$ is a 'half' of $\R_{j-1,+}$ in the sense that
$$
\R_{j-1,+}=\R_{j,+}\cup \sigma_{e_{j-1}}(\R_{j,+}) \cup \{x\in\R\colon x_i>0\,\, {\rm for}\,\, i=1,\ldots,j-1 \,\,{\rm and} \,\,x_j=0\}.
$$ 
Let $\Omega\subset \R$ be an open set, symmetric with respect to the Weyl group associated with $R^{(k)}$. This is equivalent with the statement that $\sigma_{e_{j}}(\Omega)=\Omega$ for
$j=1,\ldots,k$. Let $\Omega_j=\Omega\cap \R_{j,+}$. Applying succesively Theorem \ref{thm:main} to the sets $\Om=\Om_0,\Om_1,\ldots,\Om_k$, allows to express the kernels associated with 
$\Phi(-\Delta_{N/D,\Om_k})$ through the kernels associated with $\Phi(-\Delta_{N/D,\Om})$. Here we use the notation: for $x=(x_1,\ldots,x_d)\in \R_{k,+}$ and $\varepsilon\in\{-1,1\}^k$ 
we write $\varepsilon x=(\varepsilon_1 x_1,\ldots \varepsilon_k x_k,x_{k+1},\ldots,x_d)$ and ${\rm sgn}(\varepsilon)=\prod_{i=1}^k\varepsilon_i$.
\begin{corollary} \label{cor:three}
Let $\Om$ be an open subset of $\R$ symmetric with respect to $\langle e_j\rangle^\bot$, $j=1,\ldots,k$, with $\Om_{k,+}=\Om\cap \R_{k,+}$ as its positive chamber.
Let $\Phi$ be a Borel function on $[0,\infty)$. Assume that $\Phi(-\Delta_{N/D,\Om})$ is an integral operator with the kernel $K^\Phi_{-\Delta_{N/D,\Om}}$. Then $\Phi(-\Delta_{N/D,\Om_{k,+}})$ 
is also an integral operator with the kernel $K^\Phi_{-\Delta_{N/D,\Om_{k,+}}}$ given by
\begin{equation*}
K^\Phi_{-\Delta_{N,\Om_{k,+}}}(x,y)=\sum_{\varepsilon\in\{-1,1\}^k}K^\Phi_{-\Delta_{N,\Om}}(\varepsilon x,y),\qquad x,y\in\Om_{k,+},
\end{equation*} 
or by 
\begin{equation*}
K^\Phi_{-\Delta_{D,\Om_{k,+}}}(x,y)=\sum_{\varepsilon\in\{-1,1\}^k}{\rm sgn}(\varepsilon)K^\Phi_{-\Delta_{D,\Om}}(\varepsilon x,y),\qquad x,y\in\Om_{k,+},
\end{equation*} 
respectively.
\end{corollary}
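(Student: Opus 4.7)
The plan is to proceed by induction on $k\ge 1$, with base case $k=1$ being exactly Theorem \ref{thm:main}. First I would construct the descending chain $\Om=\Om_0\supset \Om_1\supset\cdots\supset\Om_k=\Om_{k,+}$, where $\Om_j=\Om\cap\R_{j,+}$, and verify that at each stage the pair $(\Om_{j-1},\Om_j)$ satisfies the hypotheses of Theorem \ref{thm:main} with respect to the hyperplane $\langle e_j\rangle^\bot$. Since $\sigma_{e_j}$ flips only the $j$th coordinate, it preserves $\R_{j-1,+}$; combined with the assumption $\sigma_{e_j}(\Om)=\Om$, this yields $\sigma_{e_j}(\Om_{j-1})=\Om_{j-1}$. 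By construction $\Om_j=\{x\in\Om_{j-1}\colon \langle x,e_j\rangle>0\}$ is exactly the positive part of $\Om_{j-1}$ under this reflection, so Theorem \ref{thm:main} applies.

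Next, the integral-kernel hypothesis propagates through the induction: once we know $\Phi(-\Delta_{N/D,\Om_{j-1}})$ is an integral operator with some kernel, the $j$th application of Theorem \ref{thm:main} exhibits $\Phi(-\Delta_{N/D,\Om_j})$ as an integral operator whose kernel can be used as input at the next step. Iterating (\ref{kerN}) yields, for each $j\le k$,
$$K^\Phi_{-\Delta_{N,\Om_j}}(x,y)=\sum_{\eta\in\{-1,1\}^j}K^\Phi_{-\Delta_{N,\Om}}(\eta x,y),\qquad x,y\in\Om_j.$$
The combinatorial engine behind this is the pairwise commutativity of the involutions $\sigma_{e_1},\ldots,\sigma_{e_k}$: after $j$ iterations the $2^j$ summands are indexed by subsets $S\subset\{1,\ldots,j\}$, with the summand attached to $S$ equal to $K^\Phi_{-\Delta_{N,\Om}}$ evaluated at $(\eta x,y)$ where $\eta$ flips precisely the coordinates in $S$. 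Setting $j=k$ gives the Neumann identity stated in the corollary.

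For the Dirichlet case the same induction is run verbatim, with (\ref{kerD}) in place of (\ref{kerN}); each use of (\ref{kerD}) contributes an extra minus sign to the reflected term, so after $k$ iterations an $\eta\in\{-1,1\}^k$ flipping $r$ coordinates appears with factor $(-1)^r={\rm sgn}(\eta)$. The main (and rather minor) obstacle is bookkeeping: one must check that the $2^k$ terms produced by the iteration correspond bijectively to $\{-1,1\}^k$ without collapse, which is immediate from the commutativity of the $\sigma_{e_i}$ and the fact that distinct $\eta$ act distinctly on the open chamber $\Om_{k,+}$. A formally cleaner way to organize the proof is to state a one-line lemma: if $\Om'\subset \R$ is open and symmetric with respect to two orthogonal hyperplanes $\langle v_1\rangle^\bot,\langle v_2\rangle^\bot$, then two applications of Theorem \ref{thm:main} (first to $\Om'$ and its positive part for $v_1$, then to that positive part and its positive part for $v_2$) give the sum (resp.\ signed sum) over $\{-1,1\}^2$; induction on $k$ then finishes.
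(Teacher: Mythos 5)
Your proposal is correct and is essentially the paper's own argument: the authors also obtain the corollary by applying Theorem \ref{thm:main} successively along the chain $\Om=\Om_0\supset\Om_1\supset\cdots\supset\Om_k=\Om_{k,+}$, using that each $\Om_{j-1}$ is symmetric with respect to $\langle e_j\rangle^\bot$ with positive part $\Om_j$. Your extra bookkeeping (propagation of the integral-kernel hypothesis, indexing the $2^k$ terms by $\varepsilon\in\{-1,1\}^k$, and the sign count for the Dirichlet case) just makes explicit what the paper leaves to the reader.
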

Notice that the above corollary generalizes the result of Theorem \ref{thm:main} (the case of $k=1$, up to a rotation of coordinate axes). Notice also that the formulas for $p_t^{N/D,\pi/2}(x,y)$ discussed after the statement of Corollary \ref{cor:one} 
are consistent with the formulas given in Corollary \ref{cor:three} (the case of $k=d=2$). 

\subsection{Resolvent kernels, Riesz potentials and Green's functions}
Although the considerations that follow could be carried on in the setting of a general open set $\Om$, we concentrate the attention on the case $\Om=\R$. This allows us to write 
several relevant formulas in their closed forms.

Recall that we have  $-\Delta_{N,\R}=-\Delta_{D,\R}$, and hence the kernels of the operators $\Phi(-\Delta_{N/D,\R})$,
if exist, are identical. Therefore, in what follows, in the case of $\Om=\R$ we skip the characters $N$ and $D$,
and the symbol $\R$,
and denote the  resolvent kernels, Riesz potential kernels, and Green's functions 
corresponding to $-\Delta$ simply by $r_\lambda$,  $R_\sigma$, and $G$,
respectively. When it comes to the analogous kernels  associated with the half-space $\R_+$, $d\ge1$, we keep the convention used in  Section \ref{sec:intro} (related to the heat kernels) and simply write $r_\lambda^{N/D,+}$,  $R^{N/D,+}_\sigma$, and $G^{N/D,+}$.

Considering in Theorem \ref{thm:main} $\Phi_\la(u)=(u+\lambda)^{-1}$, $u\geq 0$, $\lambda> 0$, we arrive at the corresponding resolvent operator $\mathcal R_\la=(-\Delta+\la I)^{-1}$. Then the resolvent kernel $r_\la$, if exists, is given by
$$
r_\lambda(x,y)= \int_{0}^\infty e^{-\lambda t}p_t(x,y)\,dt.
$$
It is easily seen that for any $d\ge1$ and $x,y\in\R$, $x\neq y$, the above integral converges and we have  (see \cite[8.432 (7)]{GR})
$$
r_\lambda(x,y) =  \frac{\lambda^{(d-2)/4}}{2\pi^{d/2}}\,\frac{K_{d/2-1}(\|x-y\|\sqrt{\lambda})}{\|x-y\|^{d/2-1}}\/,
$$
where $K_\nu(z)$ is the modified Bessel function of the second kind of order $\nu$ (also called Macdonald's function). Therefore, from Theorem \ref{thm:main} we directly obtain for $x,y\in \mathbb{R}_+^d$, $x\neq y$,
\begin{equation}\label{d1}
r_\lambda^{N/D,+}(x,y) =\frac{\lambda^{(d-2)/4}}{2\pi^{d/2}}\left(\frac{K_{d/2-1}(\|x-y\|\sqrt{\lambda})}{\|x-y\|^{d/2-1}}\pm\frac{K_{d/2-1}(\|\tilde{x}-y\|\sqrt{\lambda})}{\|\tilde{x}-y\|^{d/2-1}}\right)\/,
\end{equation}
where
$\tilde{x}=(x_1,\ldots,x_{d-1},-x_d)$ for $x=(x_1,\ldots,x_d)\in\R_+$, and the sign from the symbol $\pm$ is chosen accordingly to the choice of $N$ or $D$. Since $K_{-1/2}(u)=\sqrt{\pi/(2u)}\, e^{-u}$, therefore, in dimension 1, $r_\lambda(x,y) = e^{-\sqrt{\lambda}|x-y|}/(2\sqrt{2\lambda})$, and \eqref{d1} specified to $d=1$ takes the form
$$
r_\lambda^{N/D,+}(x,y) = \frac 1{\sqrt{2\la}}e^{-\sqrt{2\la}x}[\cosh/\sinh] (\sqrt{2\la}\,y)
$$
for $x\ge y\ge0$ (recall that $r_\lambda^{N/D,+}(x,y)$ is symmetric in $x$ and $y$).

On the other hand, the function $\Phi_\sigma(u)=1/u^{\sigma}$,  $\Re(\sigma)>0$,  leads to the Riesz potential operator, and its kernel $R_\sigma$, if exists, is given by
\begin{equation}\label{riesz0}
R_\sigma(x,y) =\frac1{\Gamma(\sigma)}\int_0^\infty p_t(x,y) t^{\sigma-1}\,dt.
\end{equation}
It is easily seen that in the case when $0<\Re(\sigma)<d/2$, the above integral converges for $x,y\in\R$, $x\neq y$, and it is known that then,
$$
   R_\sigma(x,y) = c_{d,\sigma}\,\frac{1}{\|x-y\|^{d-2\sigma}},
$$
where $c_{d,\sigma}= {\Gamma(d/2-\sigma)}/({2^{2\sigma}\pi^{d/2}\Gamma(\sigma)})$.
Thus, in the same range of $\sigma$, by Theorem \ref{thm:main}, for any $x,y\in\R_+$, $x\neq y$, we get
$$
 R^{N/D,+}_\sigma(x,y) = c_{d,\sigma}\,\left(\frac{1}{\|x-y\|^{d-2\sigma}}\pm\frac{1}{\|\tilde{x}-y\|^{d-2\sigma}}\right)\/.
$$

The case $\sigma=1$ is special and then the operator is customary called the Newtonian potential operator and its kernel, if exists, the Newtonian potential. Note that this is also the limiting case $\lambda=0$ for the resolvent operators, and hence, equivalently, the kernel is also known as Green's function and will be denoted by $G$. Thus, for $d\ge3$  Green's function corresponding to $-\Delta$ exists and we have $G=R_1$ and, by Theorem \ref{thm:main}, for $x,y\in\R_+$, $x\neq y$, we get
$$
G^{N/D,+}(x,y)=c_{d,1}\,\left(\frac{1}{\|x-y\|^{d-2}}\pm\frac{1}{\|\tilde{x}-y\|^{d-2}}\right)\/.
$$

It is interesting to stop by for a moment to clear the picture of Newtonian potentials  for $d=1$. The Newtonian potential on $\mathbb R$ does not exist (the  integral defining $R_1(x,y)$ in \eqref{riesz0} diverges for any $x,y\in \mathbb R$). However, the Newtonian potential for the Dirichlet Laplacian on the half-line $(0,\infty)$ does exist. It may be easily checked that the integral defining $R_1^{D,+}(x,y)$, analogous to that in \eqref{riesz0}, with $\sigma=1$ but with $p_t$ replaced by $p_t^{D,+}$, converges (due to a cancellation) for any $x,y\in \mathbb R_+$, $x\neq y$. Moreover, a calculation shows that $R_1^{D,+}(x,y)=\min\{x,y\}$. To complete the picture we mention that the Newtonian potential for the Neumann Laplacian on the half-line $(0,\infty)$ does not exist. This is because, this time, the integral defining $R_1^{N,+}(x,y)$, analogous to that in \eqref{riesz0}, with $\sigma=1$ but with $p_t$ replaced by $p_t^{N,+}$, diverges for any $x,y\in \mathbb R_+$.

\subsection{Subordinate killed and reflected Brownian motion}
One of the consequences of Theorem \ref{thm:main} is the reflection principle for subordinate killed/reflected Brownian motion. Let $W=(W_t)_{t\geq 0}$ be a $d$-dimensional Wiener process and denote by $S=(S_t)_{t\geq 0}$ an independent subordinator, i.e. an increasing (a.s.) L\'evy process, with Laplace exponent $\phi$. The function $\phi$ is a Bernstein function vanishing at zero and it has the following integral representation
\begin{equation*}
   \phi(\lambda) = b\lambda+\int_{(0,\infty)}(1-e^{-\lambda t})\mu(dt)\/,\quad \lambda\geq 0\/,
\end{equation*}
where $b\geq 0$ and $\mu$ stands for a Borel measure on $(0,\infty)$ such that $\int_{(0,\infty)}(1\wedge t)\mu(dt)<\infty$. The process $X_t=W_{S_t}$ is called a subordinate Brownian motion.  For a given open set $\Omega\subset \R$ we can consider the process $X$ killed upon exiting $\Omega$ and obtain $X^\Omega$, a killed subordinate Brownian motion, which has been intensively studied in recent years (see \cite{SV} and references therein). However, we can reverse the order of subordinating and killing, i.e. we can consider a killed Wiener process $W^\Omega$ subordinated by $S$. The process $Y_t = W^\Omega_{S_t}$ is called a subordinate killed Brownian motion and its infinitesimal generator is $-\phi(-\Delta_{D,\Omega})$. Note that the processes $X$ and $Y$ are different. However, the process $Y$ is very natural, useful and frequently applied in studying properties of $X$ (see \cite{KSV}). In the same way we can consider subordinated reflected Brownian motion. In both cases the densities of transition probabilities exists (since $g_t^{D,\Omega}(x,y)$ and $g_t^{N,\Omega}(x,y)$ exists) and consequently, by Theorem \ref{thm:main}, the reflection principles hold for transition probability densities of subordinate killed and reflected Brownian motions.

\subsection{Heat kernels}
It is interesting to recover \eqref{ker1} and \eqref{ker2} for several open sets by using formulas for Neumann and Dirichlet heat kernels given in terms of series.
\subsubsection{Intervals}
The Dirichlet heat kernel for an interval $(a,b)$ is given by the following formula (see, for instance, \cite[p. 10]{D2} and \cite[p. 108]{D1})
\begin{eqnarray*}
p_t^{D, (a,b)}(x,y) = \frac{2}{b-a}\sum_{n=1}^\infty e^{-\frac{n^2\pi^2t}{(b-a)^2}}\sin\left(\frac{n\pi}{b-a}(x-a)\right)\sin\left(\frac{n\pi}{b-a}(y-a)\right)\/,
\end{eqnarray*}
for $ x,y\in (a,b)$ and $t>0$. Due to the translational invariance and scaling property it is enough to consider the interval $(-\pi,\pi)$, where 
\begin{eqnarray*}
 p_t^{D, (-\pi,\pi)}(x,y) = \frac{1}{\pi}\sum_{n=1}^\infty e^{-\frac{n^2t}{4}}\sin\left(\frac{n}{2}(x+\pi)\right)\sin\left(\frac{n}{2}(y+\pi)\right)\/.
\end{eqnarray*}
Using standard trigonometric formulas, for $t>0$ and $x,y\in(0,\pi)$ we obtain
\begin{align*}
  p_t^{D, (-\pi,\pi)}(x,y)-p_t^{D, (-\pi,\pi)}(-x,y) &= \frac{2}{\pi}\sum_{n=1}^\infty e^{-\frac{n^2t}{4}}\sin\left(\frac{n x}{2}\right)\cos\left(\frac{n\pi}{2}\right)\sin\left(\frac{n}{2} (y+\pi)\right)\\
	&= \frac{2}{\pi}\sum_{k=1}^\infty e^{-k^2t}\sin\left(kx\right)\cos\left({k\pi}\right)\sin\left(k (y+\pi)\right)\\
	&=\frac{2}{\pi}\sum_{k=1}^\infty e^{-k^2t}\sin\left(kx\right)\sin\left(ky\right)\\
	&=p_t^{D, (0,\pi)}(x,y).
\end{align*}

Analogously, the Neumann heat kernel for $(a,b)$ is given by
$$
p_t^{N, (a,b)}(x,y) = \frac{2}{b-a}\sum_{n=0}^\infty e^{-\frac{n^2\pi^2t}{(b-a)^2}}\cos\left(\frac{n\pi}{b-a}(x-a)\right)\cos\left(\frac{n\pi}{b-a}(y-a)\right)\/,
$$
and similar calculations lead to
\begin{equation*}
  p_t^{N, (-\pi,\pi)}(x,y)+p_t^{N, (-\pi,\pi)}(-x,y)=p_t^{N, (0,\pi)}(x,y).
\end{equation*}

\subsubsection{Cones on the plane}
Given $\Phi\in(0,2\pi]$ let $\Omega_\Phi$ denote the open (infinite) cone
\begin{equation}
\label{eq:cone}
\Om_\Phi=\{x=\rho e^{i\theta}\in\mathbb{R}^2\colon 0<\rho<\infty,\quad 0<\theta<\Phi\}
\end{equation}
on the plane with vertex at the origin and aperture $\Phi$. By $p_t^{D,\Phi}$ and $p_t^{N,\Phi}$ we shall denote the Dirichlet and Neumann heat kernels related to $\Om_\Phi$, respectively. 

We shall verify the formula
\begin{equation}\label{ker4}
p_t^{D,\Phi/2}(x,y)=p_t^{D,\Phi}(x,y)-p_t^{D,\Phi}(\tilde x,y) ,\qquad x,y\in\Om_{\Phi/2},
\end{equation}
where $\tilde x=\rho e^{i(\Phi- \theta)}$ denotes the reflection of $x=\rho e^{i\theta}$ with respect to the bisector of the cone $\Om_{\Phi}$,
by using an old Carslaw and Jaeger formula that expresses $p_t^{D,\Phi}(x,y)$ by a convergent series; see \cite[p.379]{CJ}.
This formula was generalized by Ba\~nuelos and Smits to higher dimensions, cf. \cite[Lemma]{BS}. Specifying \cite[(2.2)]{BS} to dimension 2 
 one gets for $x=\rho e^{i\theta}\in \Om_{\Phi}$, $y=re^{i\eta}\in \Om_{\Phi}$ (with the rescaling $t\to 2t$, to stick to our setting), 
$$
p_t^{D,\Phi}(x,y)=\frac1{2\Phi t}\exp\left(-\frac{\rho^2+r^2}{4t}\right)\sum_{j=1}^\infty I_{\pi j/\Phi}\Big(\frac{\rho r}{2t}\Big) 2\sin\left(j\frac{\pi}{\Phi}\theta\right)\sin\left(j\frac{\pi}{\Phi}\eta\right),
$$
where $I_\nu(z)$ denotes the modified Bessel function of order $\nu$. Using the elementary formula for the product of sines leads to
\begin{equation}\label{aa}
p_t^{D,\Phi}(x,y)=\frac1{2\Phi t}\exp\left(-\frac{\rho^2+r^2}{4t}\right)\left(B^\Phi(\frac{\rho r}{2t},\theta-\eta)- B^\Phi(\frac{\rho r}{2t},\theta+\eta)\right),
\end{equation}
where
$$
B^\Phi(\tau,\gamma)=\sum_{j=1}^\infty I_{\pi j/\Phi}(\tau)\cos\left(j\frac\pi\Phi \gamma\right).
$$
Note also that 
$$
B^\Phi(\tau,\Phi-\gamma)=\sum_{j=1}^\infty I_{\pi j/\Phi}(\tau)\cos\left(\frac{j\pi}\Phi(\Phi-\gamma)\right)=\sum_{j=1}^\infty (-1)^jI_{\pi j/\Phi}(\tau)\cos\left(\frac{j\pi}\Phi\gamma\right)\/.
$$
Consequently, we have 
$$
B^\Phi(\tau,\gamma)+B^\Phi(\tau,\Phi-\gamma)=2 \sum_{j=1}^\infty I_{2\pi j/\Phi}(\tau)\cos\left(\frac{2j\pi}\Phi\gamma\right)=2B^{\Phi/2}(\tau,\gamma),
$$
and now \eqref{ker4} follows from \eqref{aa}.

The case $\Phi=2\pi$ deserves an additional comment. Then $\Om_{2\pi}$ is the whole plain with the non-negative $x_1$-axis $\{(x_1,0)\colon x_1\ge0\}$ removed, $\{p_t^{D,2\pi}\}$ is 
the Dirichlet heat kernel for this open set and we have 
\begin{equation}\label{ker5}
p_t^{D,\pi}(x,y)=p_t^{D,2\pi}(x,y)-p_t^{D,2\pi}(\tilde x,y) ,\qquad x,y\in\Om_{\pi}.
\end{equation}
However, the heat kernel for the upper half-plane $\Om_\pi=\mathbb{R}_+^2$, $\{p_t^{D,\pi}\}$, is also expressible through the Euclidean heat kernel $\{p_t\}$ on the plane and hence \eqref{ker5} 
holds with $p_t$ replacing $p_t^{D,\pi}$ on the right hand side of \eqref{ker5}. We check this by a direct calculation. Indeed,
$$
B^\pi(\tau,\gamma)=\frac12\left(e^{\tau\cos\gamma}-I_0(\tau)\right),
$$
(see \cite[9.6.34]{AS}), and hence for $x=(x_1,x_2)=\rho e^{i\theta}$, $y=(y_1,y_2)=re^{i\eta}$, $x_2,y_2>0$, 
we have
\begin{align*}
p_t^{D,\pi}(x,y)&=\frac1{2\pi t}\exp\left(-\frac{\rho^2+r^2}{4t}\right)\left(B^\pi(\frac{\rho r}{2t},\theta-\eta)- B^\pi(\frac{\rho r}{2t},\theta+\eta)\right)\\
&=\frac1{4\pi t}\exp\left(-\frac{\rho^2+r^2}{4t}\right)\left(\exp\left(\frac{\rho r}{2t}\cos(\theta-\eta)\right)-\exp\left(\frac{\rho r}{2t}\cos(\theta+\eta)\right)\right)\\
&=\frac1{4\pi t}\exp\left(-\frac{\rho^2+r^2}{4t}\right)\exp\left(\frac{\rho r}{2t}\cos\theta\cos\eta\right)\\
&\,\,\,\,\,\,\,\,\,\,\,\,\,\,\,\,\times\left(\exp\left(\frac{\rho r}{2t}\sin\theta\sin\eta\right)-\exp\left(-\frac{\rho r}{2t}\sin\theta\sin\eta\right)\right)\\
&=\frac1{4\pi t}\exp\left(-\frac{(x_1-y_1)^2}{4t}\right)\left(\exp\left(-\frac{(x_2-y_2)^2}{4t}\right)-\exp\left(-\frac{(x_2+y_2)^2}{4t}\right)\right)\\
&=p_t(x,y)-p_t(\tilde x,y).
\end{align*}

Although the case of the Neumann Laplacian on cones was not discussed in \cite{BS}, it is clear that repeating the arguments from the proof of \cite[Lemma 1]{BS} (specified to 
the dimension 2 and with the rescaling $t\to 2t$) leads to the following.
\begin{lemma} \label{lem:one}
The Neumann heat kernel related to the cone $\Om_\Phi$, $0<\Phi\le 2\pi$, is given, in polar coordinates, by
\begin{equation*}
p_t^{N,\Phi}(x,y)=\frac1{2\Phi t}\exp\left(-\frac{\rho^2+r^2}{4t}\right)\sum_{j=0}^\infty I_{\pi j/\Phi}\left(\frac{\rho r}{2t}\right) 2\cos\left(j\frac{\pi}{\Phi}\theta\right)\cos\left(j\frac{\pi}{\Phi}\eta\right).
\end{equation*}
\end{lemma}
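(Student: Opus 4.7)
The plan is to follow the strategy of Bañuelos and Smits from \cite[Lemma]{BS}, replacing the Dirichlet angular eigenfunctions by the Neumann ones. In polar coordinates on $\Om_\Phi$ the Laplacian is $\Delta=\partial_\rho^2+\rho^{-1}\partial_\rho+\rho^{-2}\partial_\theta^2$, and the Neumann boundary conditions on the two bounding rays $\theta=0$ and $\theta=\Phi$ translate to $\partial_\theta u=0$ there. Separating variables $u=T(t)R(\rho)\Theta(\theta)$ in the heat equation leads to the angular eigenvalue problem $\Theta''+\mu^2\Theta=0$, $\Theta'(0)=\Theta'(\Phi)=0$, yielding the orthonormal basis of $L^2((0,\Phi))$ consisting of $\Theta_0=1/\sqrt{\Phi}$ and $\Theta_j=\sqrt{2/\Phi}\cos(j\pi\theta/\Phi)$ for $j\ge 1$, with eigenvalues $\mu_j=j\pi/\Phi$. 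The essential differences compared with the Dirichlet case treated in \cite{BS} are that cosines replace sines and the constant mode $j=0$ is present.

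For each $j$, the radial component is handled by the Hankel transform of order $\mu_j$. Invoking Weber's second exponential integral,
\[
\int_0^\infty \lambda\, e^{-t\lambda^2}J_{\mu_j}(\lambda\rho)J_{\mu_j}(\lambda r)\,d\lambda=\frac{1}{2t}\exp\Big(-\frac{\rho^2+r^2}{4t}\Big)I_{\mu_j}\Big(\frac{\rho r}{2t}\Big),
\]
one assembles the spectral expansion
\[
p_t^{N,\Phi}(x,y)=\sum_{j=0}^\infty \Theta_j(\theta)\Theta_j(\eta)\,\frac{1}{2t}\exp\Big(-\frac{\rho^2+r^2}{4t}\Big)I_{\mu_j}\Big(\frac{\rho r}{2t}\Big),
\]
which, upon collecting the normalization constants from the $\Theta_j$'s, rearranges into the displayed formula in the lemma.

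It then remains to check, exactly as in \cite[Lemma]{BS}, that this series is genuinely the Neumann heat kernel on $\Om_\Phi$: it must satisfy the heat equation, obey the Neumann condition $\partial_\theta u = 0$ on the rays $\theta=0,\Phi$, and act as $\delta_y$ in the limit $t\to 0^+$. The main obstacle is the analytic bookkeeping, specifically uniform convergence on compact subsets of $(0,\infty)\times\Om_\Phi\times\Om_\Phi$ to justify termwise differentiation and the initial-condition limit. This is guaranteed by the large-order asymptotic $I_\nu(u)\sim(u/2)^\nu/\Gamma(\nu+1)$ combined with the overall Gaussian factor $\exp(-(\rho^2+r^2)/(4t))$, which produces rapid geometric decay of the general term on compacts. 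Since these bounds are identical to those used in the Dirichlet case, the Bañuelos--Smits argument carries over verbatim, with the only modifications being the use of cosines in place of sines and the extension of the summation range to include $j=0$.
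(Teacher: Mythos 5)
Your strategy is exactly the one the paper intends: its entire ``proof'' of Lemma \ref{lem:one} is the remark that one repeats the Ba\~nuelos--Smits argument with the Dirichlet angular eigenfunctions replaced by the Neumann ones, and your separation of variables, Weber's second exponential integral for the radial part, and the convergence discussion flesh that out along the same lines. Moreover, the spectral expansion you actually write down,
\[
\sum_{j=0}^\infty \Theta_j(\theta)\Theta_j(\eta)\,\frac{1}{2t}e^{-(\rho^2+r^2)/4t}I_{\mu_j}\Big(\frac{\rho r}{2t}\Big),\qquad \Theta_0=\Phi^{-1/2},\quad \Theta_j=\sqrt{2/\Phi}\,\cos(j\pi\theta/\Phi)\ (j\ge1),
\]
is the correct Neumann heat kernel: testing it at $\Phi=\pi$ against $p_t(x,y)+p_t(\tilde x,y)$ via $e^{z\cos\alpha}=I_0(z)+2\sum_{j\ge1}I_j(z)\cos(j\alpha)$ gives an exact match.

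However, your final claim that this ``rearranges into the displayed formula in the lemma'' is not true as stated. Your $j=0$ term carries the weight $\Theta_0(\theta)\Theta_0(\eta)=1/\Phi$, whereas the lemma's display puts the factor $2$ on every term including $j=0$, i.e.\ weight $2/\Phi$; the two expressions differ by the additive term $\tfrac{1}{2\Phi t}e^{-(\rho^2+r^2)/4t}I_0(\rho r/2t)$. Read literally, the lemma at $\Phi=\pi$ would yield $p_t(x,y)+p_t(\tilde x,y)$ plus that spurious term. So your derivation is right and the constant angular mode is double-counted in the lemma (the paper's subsequent rewriting in terms of $B^\Phi$ then drops the $I_0$ term altogether; Corollary \ref{cor:two} happens to be insensitive to this because the discrepancy transforms consistently under $\Phi\mapsto\Phi/2$). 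You should state the corrected formula and flag the mismatch rather than assert agreement. A smaller caveat: checking the heat equation, the Neumann condition and the $\delta_y$ limit does not by itself identify your series with the kernel of the semigroup generated by the form-defined $-\Delta_{N,\Om_\Phi}$ on this unbounded, non-smooth domain; one needs the decomposition of $H^1(\Om_\Phi)$ into angular modes or some uniqueness statement. The paper leaves this point implicit as well, so it is not a defect specific to your argument, but ``carries over verbatim'' is optimistic precisely there.
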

Then the the product formula for the cosines leads to
\begin{equation*}
p_t^{N,\Phi}(x,y)=\frac1{2\Phi t}\exp\left(-\frac{\rho^2+r^2}{4t}\right)\left(B^\Phi \left(\frac{\rho r}{2t},\theta-\eta\right)+ B^\Phi\left(\frac{\rho r}{2t},\theta+\eta\right)\right),
\end{equation*}
and, consequently, we obtain the following.
\begin{corollary} \label{cor:two}
For $0<\Phi\le 2\pi$ we have
\begin{equation}\label{ker3}
p_t^{N,\Phi/2}(x,y)=p_t^{N,\Phi}(x,y)+p_t^{N,\Phi}(\tilde x,y) ,\qquad x,y\in\Om_{\Phi/2}.
\end{equation}
\end{corollary}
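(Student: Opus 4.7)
The plan is to verify \eqref{ker3} directly from the closed-form expression for $p_t^{N,\Phi}$ displayed immediately above Corollary \ref{cor:two}, in exact parallel with the derivation of the Dirichlet identity \eqref{ker4} performed earlier. The central algebraic input is the symmetry identity
\[
B^\Phi(\tau,\gamma)+B^\Phi(\tau,\Phi-\gamma)=2B^{\Phi/2}(\tau,\gamma),
\]
already recorded in the Dirichlet discussion; the same identity (extended in the obvious way to accommodate the $j=0$ term present in the Neumann series of Lemma \ref{lem:one}) is what I would reuse.

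First, I would write down $p_t^{N,\Phi}(\tilde x,y)$ explicitly. For $x=\rho e^{i\theta}$ and $y=re^{i\eta}$ in $\Om_{\Phi/2}$, the reflection of $x$ across the bisector of $\Om_\Phi$ is $\tilde x=\rho e^{i(\Phi-\theta)}$, so substituting $\theta\mapsto\Phi-\theta$ in the formula for $p_t^{N,\Phi}(x,y)$ gives an expression whose two $B^\Phi$-terms have angular arguments $\Phi-\theta-\eta$ and $\Phi-\theta+\eta$. Adding this to $p_t^{N,\Phi}(x,y)$ produces four $B^\Phi$-terms, which I would pair (with $\tau=\rho r/(2t)$) as
\[
\bigl[B^\Phi(\tau,\theta-\eta)+B^\Phi(\tau,\Phi-\theta+\eta)\bigr]+\bigl[B^\Phi(\tau,\theta+\eta)+B^\Phi(\tau,\Phi-\theta-\eta)\bigr],
\]
noting that $\Phi-(\theta-\eta)=\Phi-\theta+\eta$ and $\Phi-(\theta+\eta)=\Phi-\theta-\eta$. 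Each bracket collapses via the symmetry identity to $2B^{\Phi/2}(\tau,\theta\mp\eta)$, and matching the resulting prefactor $1/(\Phi t)$ with the one, $1/(2(\Phi/2)t)=1/(\Phi t)$, appearing in the analogous closed form for $p_t^{N,\Phi/2}$ (obtained by replacing $\Phi$ with $\Phi/2$ in the same formula) yields \eqref{ker3}.

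There is no serious obstacle in this calculation; the only point requiring a moment's care is the bookkeeping of the four angular arguments and matching them correctly to the two applications of the symmetry identity. As a consistency check one may observe that \eqref{ker3} is in fact a direct instance of Corollary \ref{cor:one} applied to $\Om=\Om_\Phi$, which is symmetric with respect to the bisecting hyperplane with $\Om_{\Phi/2}$ as its positive part; the series-level identity being verified here is thus the concrete manifestation of the abstract reflection principle proved in Section \ref{sec:prel}.
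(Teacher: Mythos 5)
Your proposal is correct and follows essentially the same route as the paper: apply the product formula for cosines to the series of Lemma \ref{lem:one}, then use the identity $B^\Phi(\tau,\gamma)+B^\Phi(\tau,\Phi-\gamma)=2B^{\Phi/2}(\tau,\gamma)$ twice (once for $\gamma=\theta-\eta$, once for $\gamma=\theta+\eta$) and match prefactors. Your explicit remark about accommodating the $j=0$ term is a point the paper glosses over, and your closing observation that \eqref{ker3} is an instance of Corollary \ref{cor:one} is exactly the intended consistency check of the section.
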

The remark concerning the limiting case $\Phi=2\pi$ also applies here. 

If $\Phi_n=2\pi/2^n$, $n\in\N_0$, then for the \textit{diadic} cones  $\Om_{\Phi_n}$, \eqref{ker4} gives the following recurrence relation for the kernels $p_t^{D,\Phi_n}$:
$$
p_t^{D,\Phi_{n+1}}(x,y)=p_t^{D,\Phi_n}(x,y)-p_t^{D,\Phi_n}(\s_nx,y), \qquad x,y\in \Om_{\Phi_{n+1}}.
$$
Here $\s_n$ denotes the reflection with respect to the bisector of the cone $\Om_{\Phi_n}$. 
In the case of the first quarter $\Om_{\pi/2}$, for $x=(x_1,x_2)$, $y=(y_1,y_2)$, $x_1,x_2,y_1,y_2>0$,
\begin{align*}
p_t^{D,\pi/2}(x,y)&=\frac1{4\pi t}\left(\exp\left(-\frac{(x_1-y_1)^2}{4t}\right) - \exp\left(-\frac{(x_1+y_1)^2}{4t}\right) \right)\\
&\,\,\,\,\,\,\,\,\times\left(\exp\big(-\frac{(x_2-y_2)^2}{4t}\big)-\exp\big(-\frac{(x_2+y_2)^2}{4t}\big)\right).
\end{align*}
In the case of the cone $\Om_{\pi/4}$ with aperture $\pi/4$, for $x=(x_1,x_2)$ and $y=(y_1,y_2)$, $0<x_2<x_1,0<y_2<y_1$,
\begin{align*}
p_t^{D,\pi/4}(x,y)=&\frac1{4\pi t}\Big(\exp\big(-\frac{(x_1-y_1)^2}{4t}\big) - \exp\big(-\frac{(x_1+y_1)^2}{4t}\big) \Big)\\
&\,\,\,\,\,\,\,\,\times
\Big(\exp\big(-\frac{(x_2-y_2)^2}{4t}\big)-\exp\big(-\frac{(x_2+y_2)^2}{4t}\big)\Big)\\
&-\frac1{4\pi t}\Big(\exp\big(-\frac{(x_2-y_1)^2}{4t}\big) - \exp\big(-\frac{(x_2+y_1)^2}{4t}\big) \Big)\\
&\,\,\,\,\,\,\,\,\times\Big(\exp\big(-\frac{(x_1-y_2)^2}{4t}\big)-\exp\big(-\frac{(x_1+y_2)^2}{4t}\big)\Big).
\end{align*}
Analogous comments are in order for the Neumann heat kernel associated with the diadic cones.

\subsubsection{Truncated cones}
Given $\Phi\in(0,2\pi]$ let $\widehat\Omega_\Phi$ denote 
the open truncated cone
$$
\widehat\Om_\Phi=\{\rho e^{i\theta}\in\mathbb{R}^2\colon 0<\rho<1,\quad 0<\theta<\Phi\},
$$
with vertex at the origin, aperture $\Phi$, and 'radius' $R=1$ (restricting the attention to $R=1$ does not limit the generality). Accordingly, by $\widehat{p_t}^{D,\Phi}$ and 
$\widehat{p_t}^{N,\Phi}$ we shall denote the Dirichlet and Neumann heat kernels related to $\widehat\Om_\Phi$. Given $\nu>-1$ let $\{b_{\nu,s}\}_{s=1}^\infty$ denote the 
increasing sequence of the consecutive positive zeros of the Bessel function $J_\nu$. 

The functions 
$$
\widehat\v^\Phi_{j,s}(x)=\frac2{\sqrt{\Phi}}d_{j,s}(\Phi)J_{\pi j/\Phi}(b_{\pi j/\Phi,s}\rho)\sin(j\frac\pi\Phi\theta),\qquad j,s\in\N,
$$
where $x=\rho e^{i\theta}$ and
$$
d_{j,s}(\Phi)=\frac1{J_{1+\pi j/\Phi}(b_{\pi j/\Phi,s})},
$$ 
constitute the complete orthonormal system in $L^2(\widehat\Om_\Phi,dx)$. Moreover, $\widehat\v^\Phi_{j,s}$ are eigenfunctions of $-\Delta_{D,\widehat\Om_\Phi}$, that is 
$\widehat\v^\Phi_{j,s}\in{\D}(-\Delta_{D,\widehat\Om_\Phi})$, and
$$
-\Delta_{D,\widehat\Om_\Phi}=\lambda^\Phi_{j,s}\widehat\v^\Phi_{j,s},\qquad \lambda^\Phi_{j,s}=(b_{\pi j/\Phi,s})^2.
$$
Hence, for $x=\rho e^{i\theta}, y=r e^{i\eta}\in \widehat\Om_\Phi$ and  $t>0$,
$$
\widehat{p_t}^{D,\Phi}(x,y)=\frac4\Phi\sum_{j,s=1}^\infty d_{j,s}(\Phi)^2e^{-\lambda^\Phi_{j,s}t}\widehat\v^\Phi_{j,s}(\rho,\theta)\widehat\v^\Phi_{j,s}(r,\eta),
$$
 and the product formula for the sines leads to
$$
\widehat{p_t}^{D,\Phi}(x,y)=\frac2\Phi\Big(\hat B^\Phi(t,\rho,r;\theta-\eta)- \hat B^\Phi(t,\rho,r;\theta+\eta)\Big),
$$
where
$$
\hat{B}^\Phi(t,\rho,r;\gamma)=\sum_{j,s=1}^\infty d_{j,s}(\Phi)^2e^{-\lambda^\Phi_{j,s}t}J_{\pi j/\Phi}(b_{\pi j/\Phi,s}\,\rho)J_{\pi j/\Phi}(b_{\pi j/\Phi,s}\,r)\cos(j\frac\pi\Phi\gamma).
$$
Note that
$$
\hat{B}^\Phi(t,\rho,r;\Phi-\gamma)=\sum_{j=1}^\infty (-1)^j \sum_{s=1}^\infty d_{j,s}(\Phi)^2e^{-\lambda^\Phi_{j,s}t}J_{\pi j/\Phi}(b_{\pi j/\Phi,s}\rho)J_{\pi j/\Phi}(b_{\pi j/\Phi,s}r)\cos(j\frac\pi\Phi\gamma),
$$
and, consequently (note that $d_{j,s}(\Phi/2)=d_{2j,s}(\Phi)$ and $\lambda^{\Phi/2}_{j,s}=\lambda^\Phi_{2j,s}$), we have 
$$
\hat{B}^\Phi(t,\rho,r;\gamma)+\hat{B}^\Phi(t,\rho,r;\Phi-\gamma)=2\hat{B}^{\Phi/2}(t,\rho,r;\gamma).
$$
This leads to
\begin{equation}\label{trcone}
\widehat{p_t}^{D,\Phi/2}(x,y)=\widehat{p_t}^{D,\Phi}(x,y)-\widehat{p_t}^{D,\Phi}(\tilde x,y) ,\qquad x,y\in\widehat\Om_{\Phi/2}.
\end{equation}

Specified to $\Phi=2\pi$, \eqref{trcone} represents the reflection formula for the truncated cone $\widehat\Om_{2\pi}$, i.e. the open unit ball $B=\{(x_1,x_2)\colon x_1^2+x_2^2<1\}$ with
the segment $\{(x_1,0) \colon 0\le x_1\le1\}$ removed, where $\{\widehat{p_t}^{D,2\pi}\}$ is the Dirichlet heat kernel on $\widehat\Om_{2\pi}$. Clearly, the analogous formula  holds with the left-hand side unchanged but with $\{\widehat{p_t}^{D,B}\}$, the Dirichlet heat kernel on $B$, replacing $\{\widehat{p_t}^{D,2\pi}\}$ on the right-hand side of \eqref{trcone}. 
We shall check that this is indeed the case by writing down explicitly  $\widehat{p_t}^{D,B}$ and comparing it with $\widehat{p_t}^{D,\pi}$. 

It is known, see for instance \cite[Theorem 5.4, p.\,151]{F}, that the system of functions
$$
\Big\{\frac{\sqrt2}{\sqrt\pi J_{j+1}(b_{j,s})}J_{j}(b_{j,s}\rho)\left\{\sin,\cos\right\}
 (j\theta)\colon j,s\in\N\Big\}\cup\Big\{\frac1{\sqrt{\pi}J_{1}(b_{0,s})}J_{0}(b_{0,s}\rho) \colon s\in\N\Big\},
$$
is an ortonormal basis in $L^2(B, dx)$, and, moreover, the functions are eigenfunctions of $-\Delta_{D,B}$ corresponding to the eigenvalues $b_{j,s}^2$, $j,s\in\N$. 
Hence, the Dirichlet heat kernel for $B$ is given by 
\begin{align*}
\widehat{p_t}^{D,B}(x,y)&=\frac2\pi\sum_{j=1}^\infty\sum_{s=1}^\infty \frac1{J_{j+1}(b_{j,s})^2}e^{-b^2_{j,s}t}J_{j}(b_{j,s}\rho)J_{j}(b_{j,s}r)\big(\cos j\theta \cos j\eta+\sin j\theta \sin j\eta\big)\\
&+\frac1\pi\sum_{s=1}^\infty\frac1{J_{1}(b_{0,s})^2}e^{-b^2_{0,s}t}J_{0}(b_{0,s}\rho)J_{0}(b_{0,s}r),
\end{align*}
for $t>0$, $x=\rho e^{i\theta}$, $y=r e^{i\eta}$, $0<\rho,r<1$, $0<\theta,\eta\le2\pi$. Recall that
$$
\widehat{p_t}^{D,\pi}(x,y)=\frac2\pi\sum_{j=1}^\infty\sum_{s=1}^\infty 
\frac1{J_{j+1}(b_{j,s})^2}e^{-b^2_{j,s}t}J_{j}(b_{j,s}\rho)J_{j}(b_{j,s}r)\big(\cos j(\theta-\eta)-\cos j(\theta+\eta)\big).
$$
It is now easily seen that the identity
$$
\widehat{p_t}^{D,\pi}(x,y)=\widehat{p_t}^{D,B}(x,y)- \widehat{p_t}^{D,B}(\tilde x,y)
$$
follows for $x=\rho e^{i\theta}$, $y=re^{i\eta}$, $0<\theta,\eta<\pi$, where, as before,  $\tilde x=\rho e^{i(2\pi-\theta)}$.

Finally, note that using the orthonormal basis $\{\widehat\psi^\Phi_{j,s}\}_{j,s=1}^\infty$ that consists of eigenfunctions of $-\Delta_{N,\widehat\Om_\Phi}$, where $\widehat\psi^\Phi_{j,s}$ differs from $\widehat\v^\Phi_{j,s}$ by replacing the sines by the cosines, and applying the  arguments analogous to these just used, gives the corresponding formula for the Neumann heat kernel, that is
\begin{equation*}
\widehat{p_t}^{N,\Phi/2}(x,y)=\widehat{p_t}^{N,\Phi}(x,y)+\widehat{p_t}^{N,\Phi}(\tilde x,y) ,\qquad x,y\in\widehat\Om_{\Phi/2}.
\end{equation*}
The comments concerning the case $\Phi=2\pi$ apply here as well.


\end{document}